\numberwithin{equation}{section}       % Number formulas within sections
\theoremstyle{plain}
\newtheorem{prop}{Proposition}[section]
\newtheorem{cor}[prop]{Corollary}
\newtheorem{lem}[prop]{Lemma}
\theoremstyle{definition}
\theoremstyle{remark}
\newtheorem{rem}[prop]{Remark}
\newtheoremstyle{citing}% name
  {3pt}%      Space above, empty = `usual value'
  {3pt}%      Space below
  {\itshape}% Body font
  {}%         Indent amount (empty = no indent, \parindent = para indent)
  {\bfseries}% Thm head font
  {.}%        Punctuation after thm head
  {.5em}%     Space after thm head: " " = normal interword space;
\theoremstyle{citing}
\newtheorem*{generic}{}% all text supplied in the note
\newcommand{\N}{\mathbb{N}}
\newcommand{\Z}{\mathbb{Z}}
\newcommand{\C}{\mathbb{C}}
\newcommand{\R}{\mathbb{R}}
\DeclareMathOperator{\Lip}{Lip}
\DeclareMathOperator{\Top}{top}
\DeclareMathOperator{\dist}{dist}
\DeclareMathOperator{\diam}{diam}
\newcommand{\CC}{\overline{\C}}
\newcommand{\tvarphi}{\widetilde{\varphi}}
\newcommand{\htop}{h_{\Top}}
\newcommand{\julia}{J(f)}
\renewcommand{\=}{ : = }
\newcommand{\partn}[1]{{\smallskip \noindent \textbf{#1.}}}% for parts of a proof
\newcommand{\hrho}{\widehat{\rho}}
\newcommand{\tphi}{\widetilde{\phi}}
\newcommand{\chiinf}{\chi_{\inf}}
\begin{document}
\title[A characterization of hyperbolic potentials]
{A characterization of hyperbolic potentials of rational maps}
\author[I. Inoquio-Renteria]{Irene Inoquio-Renteria$^\dag$}
\address{Irene Inoquio-Renteria, Depto. de Matem{\'a}tica, ICMC/USP - S{\~a}o Carlos, Caixa Postal 668, 13560-970  S{\~a}o Carlos, SP, Brazil}
\email{ireinoquio@icmc.usp.br}
\thanks{$\dag$ Partially supported by Partially supported by FAPESP 2010/07267-4}
\author[J. Rivera-Letelier]{Juan Rivera-Letelier$^\ddag$}
\address{Juan Rivera-Letelier, Facultad de Matem{\'a}ticas, Pontificia Universidad Cat{\'o}lica de Chile, Avenida Vicu{\~n}a Mackenna~4860, Santiago, Chile}
\email{riveraletelier@mat.puc.cl}
\thanks{$\ddag$ Partially supported by FONDECYT N 1100922}

\subjclass[2010]{37D35, 37F10, 37D25, 37A25}
\keywords{Ergodic theory, thermodynamic formalism, complex dynamical systems, non-uniformly hyperbolic systems}

\begin{abstract}
Consider a rational map~$f$ of degree at least~$2$ acting on its Julia set~$J(f)$, a H{\"o}lder continuous potential~$\varphi : J(f) \to \R$ and the pressure~$P(f, \varphi)$.
In the case where 
$$\sup_{\julia}\varphi < P(f,\varphi),$$
the uniqueness and stochastic properties of the corresponding equilibrium states have been extensively studied.
In this paper we characterize those potentials~$\varphi$ for which this property is satisfied for some iterate of~$f$, in terms of the expanding properties of the corresponding equilibrium states.
A direct consequence of this result is that for a non-uniformly hyperbolic rational map every H{\"o}lder continuous potential has a unique equilibrium state and that this measure is exponentially mixing.
\end{abstract}
\maketitle
\section{Introduction}
In their pioneer works, Sinai, Ruelle and Bowen gave a complete description of the thermodynamic formalism of a smooth uniformly hyperbolic diffeo\-morphism acting on a compact manifold and a H{\"o}lder continuous potential \cite{Sin72,Bow75,Rue76}.
In the one-dimensional setting there have been several extensions of these results to real or complex maps which are not necessarily uniformly hyperbolic.
The lack of uniform hyperbolicity is compensated by an extra hypothesis on the potential.
Notably, for a general complex rational map~$f$ of degree at least~$2$ acting on its Julia set~$J(f)$, there have been a wealth of results for a H{\"o}lder continuous potential~$\varphi : J(f) \to \R$ satisfying
\begin{quote}
\begin{gather*}
\label{e:immediate hyperbolicity}
(*) \qquad
\sup_{z \in J(f)} \varphi(z) < P(f, \varphi),
\end{gather*}
\end{quote}
where~$P(f, \varphi)$ denotes the pressure.
% , that can be defined through the variational principle
% $$ P(f, \varphi)
% =
% \sup \left\{ h_\mu(f) + \int \varphi d\mu : \mu \text{ invariant probability measure on~$J(f)$} \right\}. $$
% Here~$h_{\mu}(f)$ is the measure theoretic entropy of~$\mu$.
% A measure is an \emph{equilibrium state of~$f$ for the potential~$\varphi$} if it realizes the supremum above.
% In this setting,
This includes a series of papers by Denker, Haydn, Przytycki and Urba{\'n}ski~\cite{DenUrb91e,DenPrzUrb96,Hay99,Prz90} on the uniqueness and the stochastic properties of the corresponding equilibrium states, extending previous results of Lyubich~\cite{Lju83} and Freire, Lopes and Ma\~n{\'e}~\cite{FreLopMan83,Man83}.
See also~\cite{BruKel98,BruTod08,IomTod10} for results for sufficiently regular interval maps, Baladi's book~\cite[\S$3.2$]{Bal00b} and references therein for piecewise monotone transformations and~\cite{UrbZdu0901,VarVia10} and references therein for (recent) results in higher dimensions.

In this paper we show that for a rational map~$f$ of degree at least~$2$, a H{\"o}lder continuous potential~$\varphi$ satisfies~$(*)$ for an iterate of~$f$ if and only if the Lyapunov exponent of each equilibrium state of~$f$ for the potential~$\varphi$ is positive.
A direct consequence of this result is that for a rational map satisfying the Topological Collet-Eckmann condition, every H{\"o}lder continuous potential satisfies~$(*)$ for an iterate of~$f$.
Combined with the work of Denker, Haydn, Przytycki and Urba{\'n}ski \emph{op.cit.}, this implies that for such~$f$ every H{\"o}lder continuous potential has a unique equilibrium state and that this measure is exponentially mixing.

To state our result more precisely, fix a rational map~$f$ of degree at least~$2$.
For an invariant probability measure~$\mu$ on~$J(f)$ we denote by~$h_{\mu}(f)$ its measure theoretic entropy and by
$$ \chi_\mu(f) \= \int \ln |f'|d\mu $$
its Lyapunov exponent.
We will say that~$\mu$ is \emph{exponentially mixing}, or that it has \emph{exponential decay of correlations}, if there are constants $C > 0$ and $\rho \in (0,1)$ such that for every integer $n\geq 1$, every bounded and measurable function $\phi : \julia \rightarrow \R$ and every Lipschitz continuous function $\psi : \julia \rightarrow \R$, we have
$$\left| \int (\phi \circ f^n) \cdot \psi d\mu - \int \phi d\mu \int \psi d\mu \right|
\le
C \| \phi \|_\infty \| \psi \|_{\Lip} \rho^n, $$
where $\| \phi \|_{\infty} = \sup_{z \in \julia}|\phi(z)|$ and $\| \psi \|_{\Lip} = \sup_{z,z' \in \julia, z \neq z'} \frac{| \psi(z)-\psi(z')|}{\vert z - z' \vert}.$

For a H{\"o}lder continuous potential~$\varphi : J(f) \to \R$, the pressure~$P(f, \varphi)$ is by definition,
$$ P(f, \varphi)
=
\sup \left\{ h_\mu(f) + \int \varphi d\mu : \mu \text{ invariant probability measure on~$J(f)$} \right\}. $$
A measure is an \emph{equilibrium state of~$f$ for the potential~$\varphi$} if it realizes the supremum above.

We will say that a H{\"o}lder continuous potential~$\varphi : J(f) \to \R$ is \emph{hyperbolic for~$f$} if for some integer~$n \ge 1$ the function~$S_n(\varphi) = \varphi + \varphi \circ f + \cdots + \varphi \circ f^{n - 1}$ satisfies
$$ \sup_{z \in \julia} S_n(\varphi)(z) < P(f, S_n(\varphi)). $$
\begin{generic}[Main Theorem]\label{main theorem}
For a rational map~$f$ of degree at least~$2$ and a H{\"o}lder continuous potential $\varphi: \julia \rightarrow \R$, the following conditions are equivalent:
\begin{enumerate}
\item[1.]
the potential~$\varphi$ is hyperbolic;
% \item The measure theoretic entropy of each equilibrium state of $f$ for the potential $\varphi$ is strictly positive.
\item[2.]
the Lyapunov exponent of each equilibrium state of~$f$ for the potential~$\varphi$ is strictly positive.
\end{enumerate}
Furthermore, if these equivalent conditions are satisfied, then:
\begin{enumerate}    
\item[3.]
there is a unique equilibrium state~$\mu$ of~$f$ for the potential~$\varphi$.
Moreover the measure theoretic entropy of~$\mu$ is strictly positive and~$\mu$ is exponentially mixing.
\end{enumerate}
\end{generic} 
For a rational map~$f$ such that the Lyapunov exponent of every invariant probability measure on the Julia set is strictly positive, part~$2$ of the \hyperref[main theorem]{Main Theorem} is automatically satisfied for every H{\"o}lder continuous potential~$\varphi$.
So the following corollary is an immediate consequence of the \hyperref[main theorem]{Main Theorem}.
\begin{cor}\label{cor1}
Let~$f$ be a rational map of degree at least~$2$ such that the Lyapunov exponent of each invariant probability measure on~$J(f)$ is strictly positive.
Then for every H{\"o}lder continuous potential $\varphi : \julia \rightarrow \R$ properties~$1$, $2$ and~$3$ of the \hyperref[main theorem]{Main Theorem} hold.
\end{cor}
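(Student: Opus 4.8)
The plan is to read Corollary~\ref{cor1} off the Main Theorem; essentially the only thing to check is that hypothesis~(2) of that theorem holds automatically for \emph{every} Hölder continuous potential, given the standing assumption on~$f$.

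First I would note that any equilibrium state~$\mu$ of~$f$ for~$\varphi$ is, by definition, an $f$-invariant Borel probability measure supported on~$\julia$, namely one realizing the supremum in the definition of~$P(f,\varphi)$. Hence it belongs to the class of measures to which the hypothesis of the Corollary applies, so its Lyapunov exponent $\chi_\mu(f) = \int\ln|f'|\,d\mu$ is strictly positive. Since this holds for \emph{every} equilibrium state of~$\varphi$, condition~(2) of the Main Theorem is satisfied. Then I would simply invoke the Main Theorem: condition~(2) yields condition~(1), so~$\varphi$ is hyperbolic; and the ``furthermore'' clause yields condition~(3), i.e.\ $\varphi$ has a unique equilibrium state, with strictly positive measure-theoretic entropy, and this measure is exponentially mixing. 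As~$\varphi$ was an arbitrary Hölder continuous potential, this is exactly the assertion of the Corollary.

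I do not expect any genuine obstacle in the Corollary itself: all the difficulty is inherited from the Main Theorem, whose substantive direction is~$(2)\Rightarrow(1)$ and which then feeds condition~(1) into the uniqueness and exponential-mixing results of Denker, Haydn, Przytycki and Urba{\'n}ski applied to a suitable iterate~$f^n$ and the Birkhoff sum~$S_n(\varphi)$. The only point perhaps worth recording, although it is not needed for the deduction, is that condition~(2) is not vacuously true here: for a Hölder continuous potential on a rational map the entropy function is upper semicontinuous on the weak-$*$ compact set of $f$-invariant probability measures on~$\julia$ (the map being real-analytic on~$\CC$), so the supremum defining~$P(f,\varphi)$ is attained and at least one equilibrium state exists.
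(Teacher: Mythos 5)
Your proposal is correct and follows exactly the paper's own (one-line) deduction: every equilibrium state is in particular an invariant probability measure on~$\julia$, so the hypothesis forces condition~$2$ of the Main Theorem, and properties~$1$ and~$3$ then follow from that theorem. The extra remark on existence of equilibrium states via upper semicontinuity of entropy is harmless but, as you note, not needed for the corollary.
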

This result applies in particular to rational maps satisfying a strong form of non-uniform hyperbolicity, called the \emph{Topological Collet-Eckmann condition}.\footnote{However there are rational maps for which the Lyapunov exponent of each invariant probability measure on the Julia set is strictly positive and that do not satisfy the topological Collet-Eckmann condition, see~\cite[Corollary~$2$]{BruTod06}.}
It can be defined as the following strong form of Pesin's non-uniform hyperbolicity condition:
\begin{quote}    
There exists $\chi > 0$ such that the Lyapunov exponent of each invariant probability measure~$\mu$ on~$J(f)$ satisfies
$$ \chi_\mu(f) \ge \chi.$$
\end{quote}
See~\cite{PrzRivSmi03} for several equivalent formulations of this condition, including the original formulation in topological terms.
Although the topological Collet-Eckmann condition is very strong, the set of rational maps that satisfy it, but that are not uniformly hyperbolic, has positive Lebesgue measure in the space of rational maps of a given degree, see~\cite{Aspthesis,GraSwi00,Smi00} and references therein. 

We thus obtain the following corollary as a direct consequence of the previous one.
\begin{cor}\label{cor2}
Let~$f$ be a rational map of degree at least~$2$ satisfying the Topological Collet-Eckmann condition.
Then for every H{\"o}lder continuous potential $\varphi: \julia \rightarrow \R$ properties~$1$, $2$ and~$3$ of the Main Theorem hold.
\end{cor}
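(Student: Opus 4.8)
The plan is simply to read off Corollary~\ref{cor2} from Corollary~\ref{cor1}. Recall that, by the definition recalled just above the statement, a rational map~$f$ of degree at least~$2$ satisfies the Topological Collet--Eckmann condition precisely when there is a constant~$\chi > 0$ such that every invariant probability measure~$\mu$ on~$\julia$ has Lyapunov exponent $\chi_\mu(f) \ge \chi$. In particular, for such an~$f$ the Lyapunov exponent of each invariant probability measure on~$\julia$ is strictly positive, which is exactly the hypothesis of Corollary~\ref{cor1}. Hence, for every H{\"o}lder continuous potential $\varphi : \julia \to \R$, properties~1, 2 and~3 of the \hyperref[main theorem]{Main Theorem} hold, and there is nothing left to prove.

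Equivalently, one may appeal to the \hyperref[main theorem]{Main Theorem} directly: since every equilibrium state of~$f$ for~$\varphi$ is in particular an invariant probability measure on~$\julia$, its Lyapunov exponent is at least~$\chi > 0$, so condition~2 of the \hyperref[main theorem]{Main Theorem} is satisfied for every H{\"o}lder continuous~$\varphi$; conditions~1 and~3 then follow from the theorem.

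There is no real obstacle here --- the entire mathematical content is carried by the \hyperref[main theorem]{Main Theorem} (or by Corollary~\ref{cor1}) together with the definition of the Topological Collet--Eckmann condition. The only point worth flagging is that condition~2 of the \hyperref[main theorem]{Main Theorem} becomes genuinely informative only once one knows that equilibrium states for H{\"o}lder continuous potentials exist; this is standard, following from the upper semicontinuity of the entropy map $\mu \mapsto h_\mu(f)$ on the compact space of $f$-invariant probability measures, but it is not needed for the implication used above, where the lower bound $\chi_\mu(f) \ge \chi$ holds for \emph{all} invariant measures simultaneously.
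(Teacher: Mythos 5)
Your proposal is correct and coincides with the paper's argument: the Topological Collet--Eckmann condition is defined here precisely as a uniform lower bound~$\chi > 0$ on the Lyapunov exponents of all invariant probability measures on~$J(f)$, so Corollary~\ref{cor2} is read off directly from Corollary~\ref{cor1} (equivalently, from condition~2 of the \hyperref[main theorem]{Main Theorem}), exactly as in the paper.
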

The existence and uniqueness of the equilibrium state where shown in~\cite[Theorem~A]{ComRiv11}

We now comment on the proof of the Main Theorem.
The implication~$1 \Rightarrow 2$ is obtained from the variational principle and Ruelle's inequality, through a combination of known arguments.
The implication~$1 \Rightarrow 3$ is easily deduced from the combined results of Denker, Haydn, Przytycki and Urba\'nski \emph{op.cit}.  
The main difficulty in the proof of the Main Theorem is in the proof of the implication~$2 \Rightarrow 1$, which we deduce from the following result.
\begin{generic}[Key Lemma]
\label{key lemma}
Let $f$ be a rational map of degree at least~$2$ and let~$\mu$ be an ergodic invariant probability measure on~$\julia$ whose Lyapunov exponent is strictly positive.
Then for every H{\"o}lder continuous potential~$\varphi : \julia \to \R$ and each~$t \ge 0$ we have
$$ P\left(f,\varphi-t\ln|f'|\right)
> \int \varphi d\mu - t \chi_\mu(f). $$
\end{generic}
For~$\varphi$ and~$t \ge 0$ as in this lemma, the pressure~$P(f, \varphi - t \ln |f'|)$ and the equilibrium states of~$f$ for the potential~$\varphi - t \ln |f'|$ are defined as before, through the variational principle.\footnote{Note however that when~$t > 0$ the usual topological pressure is equal to~$+ \infty$, whereas~$P(f, \varphi - t \ln |f'|) < + \infty$.}

Although we only need this result for~$t = 0$, we have included this more general formulation for future reference.
When~$\varphi$ is constant equal to~$0$ and~$t > 0$, this result appeared in the first and second version of the preprint~\cite{PrzRiv0806v2}, but it did not appear in the published version of this paper.
The proof given here is mostly a throughout revision of~\cite[Lemma~$8.2$]{PrzRiv0806v2}, although some additional arguments are needed to handle the case~$t = 0$ that we use here to prove the Main Theorem.

As in the preprint~\cite{PrzRiv0806v2}, we obtain the following result as a direct consequence of the \hyperref[key lemma]{Key Lemma}.
First note that if we put
$$ \chiinf(f)
\=
\inf \{ \chi_{\mu}(f) : \mu \text{ invariant probability measure on~$J(f)$} \}, $$
then for each~$t \ge 0$ we have~$P(f, - t \ln |f'|) \ge - t \chiinf$.
We call
$$ t_+ \= \sup \{ t \ge 0 : P(f, - t \ln |f'|) > - t \chiinf \} $$
the \emph{freezing point of~$f$}.
When the freezing point~$t_+$ is finite, it is strictly positive and by definition for each~$t \in [t_+, + \infty)$ we have~$P(f, - t \ln |f'|) = - t \chiinf$; so the function $t \mapsto P(f, - t \ln |f'|)$ cannot be real analytic at~$t = t_+$.
Although for a uniformly hyperbolic rational map~$f$ we always have~$t_+ = + \infty$, Makarov and Smirnov have shown that there are (generalized polynomial like) maps that satisfy the Topological Collet-Eckmann condition and whose freezing point is finite, see~\cite{MakSmi03}.
\begin{cor}
\label{c:after freezing}
Let~$f$ be a rational map satisfying the Topological Collet-Eckmann condition and whose freezing point~$t_+$ is finite.
Then the following properties hold.
\begin{enumerate}
\item[1.]
For each invariant probability measure~$\mu$ on~$J(f)$ we have~$\chi_{\mu}(f) > \chiinf(f)$.
\item[2.]
For each~$t \in (t_+, + \infty)$ there is no equilibrium state of~$f$ for the potential~$- t \ln |f'|$.
\item[3.]
There is at most one equilibrium state of~$f$ for the potential~$- t \ln |f'|$.
If such a measure exists, then its measure theoretic entropy is strictly positive and the pressure function $t \mapsto P(f, - t \ln |f'|)$ is not differentiable at~$t = t_+$.
\end{enumerate}
\end{cor}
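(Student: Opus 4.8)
The plan is to obtain parts~1 and~2 and the quantitative content of part~3 directly from the \hyperref[key lemma]{Key Lemma} together with elementary convexity of the pressure, and to obtain the uniqueness in part~3 from the thermodynamic formalism of geometric potentials of Topological Collet-Eckmann maps. Write $\beta(t) \= P(f, - t \ln|f'|)$ for $t \ge 0$. Recall from the discussion preceding the statement that $\beta$ is finite, that $\beta(t) \ge - t \chiinf(f)$ for all $t \ge 0$, and that $\beta(t) = - t \chiinf(f)$ for every $t \ge t_+$, with $t_+ \in (0, + \infty)$ by hypothesis. Being a pointwise supremum of the affine functions $t \mapsto h_\nu(f) - t \chi_\nu(f)$, over all invariant probability measures~$\nu$ on~$\julia$, the function $\beta$ is convex; in particular it is differentiable at every $t \in (t_+, + \infty)$, where $\beta'(t) = - \chiinf(f)$, and its right derivative at~$t_+$ equals $- \chiinf(f)$.

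\emph{Part~1.} The Topological Collet-Eckmann condition gives $\chiinf(f) \ge \chi > 0$, hence $\chi_\mu(f) \ge \chiinf(f) > 0$ for every invariant probability measure~$\mu$ on~$\julia$. Suppose some such~$\mu$ satisfied $\chi_\mu(f) = \chiinf(f)$. Decomposing~$\mu$ into ergodic components, whose Lyapunov exponents are all at least~$\chiinf(f)$ and average to $\chi_\mu(f) = \chiinf(f)$, almost every ergodic component~$\nu$ has $\chi_\nu(f) = \chiinf(f) > 0$; fix one such~$\nu$. Applying the \hyperref[key lemma]{Key Lemma} to~$\nu$ with $\varphi \equiv 0$ and an arbitrary $t \ge 0$ gives $\beta(t) = P(f, - t \ln|f'|) > - t \chi_\nu(f) = - t \chiinf(f)$. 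Since this holds for all $t \ge 0$ we would get $t_+ = + \infty$, a contradiction; hence $\chi_\mu(f) > \chiinf(f)$ for every invariant probability measure~$\mu$.

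\emph{Parts~2 and~3 (entropy and non-differentiability).} These follow from a single supporting-line argument. Let $\mu$ be an equilibrium state of~$f$ for $- t_0 \ln|f'|$, with $t_0 \ge 0$. By the variational principle, $h_\mu(f) - t \chi_\mu(f) \le \beta(t)$ for all $t \ge 0$, with equality at $t = t_0$; so the affine map $t \mapsto h_\mu(f) - t \chi_\mu(f)$, of slope $- \chi_\mu(f)$, is a supporting line of the convex function~$\beta$ at~$t_0$. If $t_0 \in (t_+, + \infty)$, then $\beta$ is differentiable at~$t_0$, so this supporting line is unique with slope $\beta'(t_0) = - \chiinf(f)$, forcing $\chi_\mu(f) = \chiinf(f)$ and contradicting Part~1; this proves Part~2. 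If $t_0 = t_+$, then $h_\mu(f) - t_+ \chi_\mu(f) = \beta(t_+) = - t_+ \chiinf(f)$ yields $h_\mu(f) = t_+ \bigl( \chi_\mu(f) - \chiinf(f) \bigr)$, which is strictly positive because $t_+ > 0$ and $\chi_\mu(f) > \chiinf(f)$ by Part~1; moreover the supporting line at~$t_+$ has slope $- \chi_\mu(f) < - \chiinf(f) = \beta'_+(t_+)$, whence $\beta'_-(t_+) \le - \chi_\mu(f) < \beta'_+(t_+)$ and $\beta$ fails to be differentiable at~$t_+$.

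\emph{Part~3 (uniqueness).} This is the one assertion not contained formally in the \hyperref[key lemma]{Key Lemma}, and the step I expect to be the main obstacle. For $t < t_+$ the uniqueness of the equilibrium state of~$f$ for $- t \ln|f'|$ already belongs to the thermodynamic formalism of geometric potentials of Topological Collet-Eckmann maps (see \cite{ComRiv11} and the discussion in \cite{PrzRiv0806v2}), so only $t = t_+$ remains. The plan is to use that if an equilibrium state of~$f$ for $- t_+ \ln|f'|$ exists, then, since the ergodic components of an equilibrium state are again equilibrium states, by the previous paragraph every ergodic equilibrium state has strictly positive entropy and Lyapunov exponent at least~$\chi$, hence is a hyperbolic measure; one then applies the conformal-measure and nice-inducing-scheme techniques of \cite{DenPrzUrb96, Hay99}, in the form adapted to geometric potentials in \cite{PrzRiv0806v2, ComRiv11}, to show that any such hyperbolic equilibrium state is absolutely continuous with respect to the $t_+$-conformal measure and is thereby uniquely determined. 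Combined with the positivity of the entropy and the failure of differentiability of~$\beta$ at~$t_+$ obtained above, this gives all of part~3.
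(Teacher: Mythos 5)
Parts~1 and~2 and the positive-entropy and non-differentiability claims in part~3 of your proposal are correct, and they follow essentially the paper's route: everything there rests on the \hyperref[key lemma]{Key Lemma} with $\varphi \equiv 0$ together with the variational principle. Your convexity/supporting-line phrasing is a harmless repackaging of the paper's direct computation (the paper applies the Key Lemma at $t = t_+$ to get part~1, estimates $h_\mu(f) - t\chi_\mu(f)$ directly for $t > t_+$ to get part~2, and compares the one-sided difference quotients of $t \mapsto P(f,-t\ln|f'|)$ at $t_+$ for the non-differentiability), and your explicit passage to ergodic components in part~1 is a welcome detail, since the Key Lemma is stated for ergodic measures.

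The genuine gap is the uniqueness assertion of part~3, that there is at most one equilibrium state for $-t_+\ln|f'|$, which you yourself flag as the main obstacle and only sketch. As written the sketch would not go through: the conformal-measure machinery of \cite{DenUrb91e,DenPrzUrb96,Hay99} is developed for H{\"o}lder potentials with $\sup\varphi < P(f,\varphi)$, and $-t_+\ln|f'|$ is precisely not of that type --- at the freezing point $P(f,-t_+\ln|f'|) = -t_+\chiinf(f) = \sup_{\mu}\int -t_+\ln|f'|\,d\mu$, so condition~2 of Proposition~\ref{p:continuous hyperbolicity} fails; similarly, the inducing-scheme results of \cite{PrzRiv0806v2,ComRiv11} are established for parameters strictly before the freezing point, so ``adapting'' them at $t = t_+$ is exactly the work you have not done. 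Moreover, even granting a suitable $t_+$-conformal measure and absolute continuity of every positive-Lyapunov equilibrium state with respect to it, uniqueness does not follow without an additional ergodicity (or positivity-of-density) argument for that conformal measure. The paper avoids all of this by invoking a result tailored to the situation: since under the Topological Collet-Eckmann condition every invariant measure has strictly positive Lyapunov exponent, \cite[Corollary~11]{Dob0804} (see also \cite{Led84}) yields directly that there is at most one equilibrium state for $-t_+\ln|f'|$; the entropy and non-differentiability statements are then obtained exactly as in your proposal. If you want a self-contained argument, the missing ingredient you must supply is precisely such a Ledrappier--Dobbs type statement for geometric potentials, not the hyperbolic-potential transfer-operator theory.
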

The proof of this corollary is given at the end of~\S\ref{s:continuous hyperbolic}.

Note that part~$1$ of this result implies that for~$f$ as in the statement of this corollary the function~$\mu \mapsto \chi_{\mu}(f)$ is discontinuous.
This is not so surprising, as Bruin and Keller showed in~\cite[Proposition~$2.8$]{BruKel98} that this holds for every S-unimodal interval map satisfying the Collet-Eckmann condition.
 \subsection{Organization}\label{organiza_Paper}
After recalling a few preliminary facts in~\S\ref{preliminar}, in~\S\ref{s:continuous hyperbolic} we gather several properties of hyperbolic potentials, that hold for a general continuous map acting on a compact metric space.
After these general considerations, the \hyperref[main theorem]{Main Theorem} is easily deduced from the \hyperref[key lemma]{Key Lemma}.
The proof of Corollary~\ref{c:after freezing} is given at the end of~\S\ref{s:continuous hyperbolic}.

The rest of the paper is devoted to the proof of the \hyperref[key lemma]{Key Lemma}.
In the case where the measure is supported on an exceptional periodic orbit the proof is different; we treat this special case separately in~\S\ref{s:periodic case}.
The proof in the general case is based on the construction of a suitable ``Iterated Function System'' generated by the rational map (Proposition~\ref{p:construction of IFS} in~\S\ref{ss:IFSs}).
We deduce the \hyperref[key lemma]{Key Lemma} from this result in~\S\ref{proof of key lemma}, after some preparatory considerations in~\S\ref{Cantor repellers}.
The proof of Proposition~\ref{p:construction of IFS} is given in~\S\ref{s:construction of IFS}.

\subsection{Acknowledgements}
We would like to thank Feliks Przytycki and the referee for several comments and corrections.

\section{Preliminaries}\label{preliminar}
Throughout the rest of this paper we denote by~$\N$ the set of strictly positive integers.
     %     The following lemma is well known, and it follows easily from the Birkhoff Ergodic Theorem, see for example
     %     its proof in~\cite[Lemma~$1.3$]{PrzRivSmi04} and~\cite[Corollary~A]{Przy???}.
     % \begin{lem}\label{lem0}
     %     Let $(Z,\mathcal{F},\nu)$ be a measure space and let $F:Z\rightarrow Z$ be an ergodic measure preserving the measure $\nu$. 
     %     Then for each integrable function $\psi:Z\rightarrow \R$ exists a set of full measure of $x\in Z$ such that                  
     %     $$\limsup_{n\rightarrow + \infty}\sum_{j=0}^{n-1}\left(\psi\circ F^{j}(x)-\int_Z\psi d\nu\right)\geq 0.$$       
     % \end{lem}

We endow the Riemann sphere~$\CC$ with the spherical metric, that we denote by~$\dist$.
Distances, diameters, balls and derivatives are all taken with respect to the spherical metric.
For each $z \in \CC$ and $r > 0$ we denote by~$B(z,r)$ the open ball centered at~$z$ and of radius~$r$.

For basic references on the dynamics of rational maps, see~\cite{CarGam93,Mil06}.
The following is a version of the classical Koebe Distortion Theorem (see \emph{e.g.}~\cite{Pom92}), adapted to the spherical metric and to inverse branches of a given rational map.
\begin{generic}[Koebe Distortion Theorem]
For each rational map~$f$ of degree at least~$2$ there are constants~$\rho_0 > 0$ and~$K > 1$ such that the following property holds.
Let~$n \ge 1$ be an integer, $x$ a point in~$\CC$ and~$\rho \in (0, \rho_0)$, such that~$f^n$ maps a neighborhood of~$x$ univalently onto~$B(f^n(x), \rho)$.
Then for every pair of points~$z$ and~$z'$ in the connected component of~$f^{-n}(B(f^n(x), \rho/2))$ containing~$x$, we have
$$ K^{-1} \le |(f^n)'(z)| / |(f^n)'(z')| \le K. $$
\end{generic}
To deduce this result from the classical Koebe Distortion Theorem, fix a periodic orbit~$\mathcal{O}$ of period at least~$3$ of~$f$ and let~$\rho_0 > 0$ be sufficiently small such that for every point~$z_0$ in~$\CC$ the ball~$B(z_0, \rho_0)$ contains at most one point of~$\mathcal{O}$.
Thus for every~$\rho \in (0, \rho_0)$ and every integer~$n \ge 1$, each connected component~$W$ of~$f^{-n}(B(z_0, \rho))$ contains at most one point of~$\mathcal{O}$, so
$$ \diam(\CC \setminus W)
\ge 
\min \{ \diam(\mathcal{O} \setminus \{ z \}) : z \in \mathcal{O} \}
>
0. $$
After a change of coordinates that is an isometry with respect to the spherical metric we assume that~$\infty$ is in~$\mathcal{O}$, but not in~$W$.
Then~$W$ is contained in~$\C$ and the spherical metric on the connected component~$W'$ of~$f^{-n}(B(z_0, \rho/2))$ contained in~$W$ is comparable to the Euclidean metric up to a multiplicative factor that only depends on~$\min \{ \diam(\mathcal{O} \setminus \{ z \}) : z \in \mathcal{O} \}$.
Similarly, in a coordinate such that~$B(z_0, \rho) \subset \C$, the Euclidean metric on~$B(z_0, \rho/2)$ is comparable to the spherical one up to a multiplicative factor that only depends on~$\rho_0$.
So the statement above is a direct consequence of the classical Koebe Distortion Theorem.
\section{Hyperbolic potentials of a topological dynamical system}
\label{s:continuous hyperbolic}
In this section we gather several properties of hyperbolic potentials which hold for a general continuous map acting on a compact metric space; for background on the thermodynamic formalism in this setting, see for example~\cite{PrzUrb10,Rue78,Wal82}.
After these results we give the proof of the \hyperref[main theorem]{Main Theorem}, assuming the \hyperref[key lemma]{Key Lemma}.
The proof of Corollary~\ref{c:after freezing} is given at the end of this section.

Let~$X$ be a compact metric space and~$f: X\rightarrow X$ a continuous map.
We denote by $\mathcal{M}(X,f)$ the space of Borel probability probability measures on~$X$ that are invariant by~$f$.

As for rational maps, for a continuous potential~$\varphi : X \to \R$ the \emph{pressure} is defined by
$$ P(f, \varphi)
=
\sup_{\mu \in \mathcal{M}(X, f)} \left( h_\mu(f) + \int \varphi d\mu \right) $$
and a measure realizing this supremum is called an \emph{equilibrium state of~$f$ for the potential~$\varphi$}.
Furthermore, we say that a continuous potential~$\varphi : X \to \R$ is \emph{hyperbolic for~$f$}, if there exists an integer~$n \ge 1$ such that
\begin{equation}
\label{e:hyperbolicity}
\sup_{z \in X} S_n(\varphi)(z) < P(f^n, S_n(\varphi)).
\end{equation}
Note that there are hyperbolic potentials for which this conditions fails for~$n = 1$, see Remark~\ref{hyperbolic remark} below.
% Recall that a Borel probability measure~$\mu$ on~$X$ is invariant by~$f$, if for every continuous function $\phi:X\rightarrow \R$ we have
%      $$ \int \phi d\mu = \int\phi \circ f d\mu.$$
Since for each integer~$n \ge 1$ and each continuous potential~$\varphi : X \to \R$ we have~$P(f^n, S_n(\varphi)) = n P(f, \varphi)$, the potential~$\varphi$ is hyperbolic for~$f$ if and only if there is an integer~$n \ge 1$ such that
$$ \sup_X \frac{1}{n} S_n(\varphi) < P(f, \varphi). $$

In the following proposition we give several characterizations of hyperbolic potentials.
Two continuous potentials~$\varphi, \tvarphi : X \to \R$ are said to be \emph{co\nobreakdash-homologous} if there is a continuous function~$h : X \to \R$ such that
$$ \tvarphi = \varphi + h - h \circ f . $$
In this case, for each invariant probability measure~$\mu$ on~$X$ we have $\int \tvarphi d\mu = \int \varphi d\mu$; this implies that~$P(f, \tvarphi) = P(f, \varphi)$ and that the equilibrium states of~$f$ for the potentials~$\varphi$ and~$\tvarphi$, coincide.
\begin{prop}\label{p:continuous hyperbolicity}  
Let~$X$ be a compact metric space and let $f: X\rightarrow X$ be a continuous function. 
Then for every continuous potential $\varphi : X \rightarrow \R$ we have
\begin{multline}
\label{e:invariant supremum}
\lim_{n\rightarrow + \infty} \sup_{X}\frac{1}{n} S_n(\varphi) 
=
\inf \left( \sup_X \frac{1}{n} S_n(\varphi) \right)_{n = 1}^{+ \infty}
\\ =
\sup_{\mu\in\mathcal{M}(X,f)}\int \varphi d\mu \le P(f, \varphi).
\end{multline}
Furthermore, the following properties are equivalent.
\begin{enumerate}
\item[1.] The potential~$\varphi$ is hyperbolic for~$f$.
% There exists an integer $n\geq 1$ such that 
%            $\sup_{X}\frac{1}{n}S_n(\varphi)< P(f,\varphi).$
\item[2.]
$\sup_{\mu\in\mathcal{M}(X,f)}\int \varphi d\mu< P(f,\varphi).$
\item[3.]
The measure theoretic entropy of each equilibrium state of~$f$ for the potential~$\varphi$ is strictly positive.
\item[4.]
There exists a continuous potential~$\tvarphi$ co\nobreakdash-homologous to~$\varphi$ such that
$$ \sup_{X} \tvarphi< P(f,\tvarphi). $$
\item[5.]
Every continuous potential co\nobreakdash-homologous to~$\varphi$ is hyperbolic for~$f$.
     \end{enumerate}
    \end{prop}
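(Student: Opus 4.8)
The plan is to first establish the chain of (in)equalities in~\eqref{e:invariant supremum}, and then to deduce the equivalence of properties~1--5 from it together with the definition of the pressure.

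To prove~\eqref{e:invariant supremum}, I would set $a_n := \sup_X S_n(\varphi)$. The cocycle identity $S_{m+n}(\varphi) = S_m(\varphi) + S_n(\varphi)\circ f^m$ gives $a_{m+n} \le a_m + a_n$, so Fekete's subadditivity lemma yields $\lim_{n\to+\infty} a_n/n = \inf_{n\ge1} a_n/n$ (finite, since $a_n/n \ge \inf_X\varphi$); this is the first equality, and I call the common value~$L$. Every $\mu \in \mathcal{M}(X,f)$ satisfies $\int\varphi\,d\mu = \frac1n\int S_n(\varphi)\,d\mu \le a_n/n$, hence $\sup_\mu\int\varphi\,d\mu \le L$. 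For the reverse inequality I would pick, for each~$n$, a point $x_n\in X$ with $S_n(\varphi)(x_n)\ge a_n-1$ and take a weak-$*$ accumulation point~$\mu$ of the empirical measures $\frac1n\sum_{k=0}^{n-1}\delta_{f^k(x_n)}$; a standard Krylov--Bogolyubov argument shows $\mu\in\mathcal{M}(X,f)$, and continuity of~$\varphi$ gives $\int\varphi\,d\mu = \lim\frac1n S_n(\varphi)(x_n) = L$ along the corresponding subsequence, so $\sup_\mu\int\varphi\,d\mu \ge L$. Finally $L = \sup_\mu\int\varphi\,d\mu \le \sup_\mu\big(h_\mu(f)+\int\varphi\,d\mu\big) = P(f,\varphi)$, since $h_\mu(f)\ge0$.

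For the equivalences, property~1 asserts that some term of $\big(\sup_X\frac1n S_n(\varphi)\big)_{n\ge1}$ is $< P(f,\varphi)$, which — that sequence being one whose infimum, by~\eqref{e:invariant supremum}, equals $\sup_\mu\int\varphi\,d\mu$ — holds if and only if $\sup_\mu\int\varphi\,d\mu < P(f,\varphi)$; this is $1\Leftrightarrow2$. For $2\Rightarrow3$, any equilibrium state~$\mu$ has $h_\mu(f) = P(f,\varphi)-\int\varphi\,d\mu \ge P(f,\varphi)-\sup_\nu\int\varphi\,d\nu > 0$. For $3\Rightarrow2$ I would argue by contraposition: if $\sup_\mu\int\varphi\,d\mu = P(f,\varphi)$, take a weak-$*$ accumulation point~$\mu_\infty$ of a sequence maximizing $\mu\mapsto\int\varphi\,d\mu$; by continuity of~$\varphi$, $\int\varphi\,d\mu_\infty = P(f,\varphi)$, and then $0\le h_{\mu_\infty}(f) \le P(f,\varphi)-\int\varphi\,d\mu_\infty = 0$, so~$\mu_\infty$ is an equilibrium state of zero entropy, contradicting~3.

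The remaining content is $2\Leftrightarrow4$ and $4\Leftrightarrow5$, and the key point is that for every $n\ge1$ the potential $\frac1n S_n(\varphi)$ is co-homologous to~$\varphi$: a telescoping computation gives
\[ \varphi + h - h\circ f = \frac1n S_n(\varphi), \qquad h := -\frac1n\sum_{i=0}^{n-2}(n-1-i)\,\varphi\circ f^{i}, \]
with~$h$ continuous. Given~2, I would use~\eqref{e:invariant supremum} to choose~$n$ with $\sup_X\frac1n S_n(\varphi) < P(f,\varphi)$ and put $\tvarphi := \frac1n S_n(\varphi)$; since co-homologous potentials have equal pressure, $\sup_X\tvarphi < P(f,\varphi) = P(f,\tvarphi)$, which is~4. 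Conversely, co-homologous potentials also have equal integrals against every invariant measure, so a $\tvarphi$ as in~4 gives $\sup_\nu\int\varphi\,d\nu = \sup_\nu\int\tvarphi\,d\nu \le \sup_X\tvarphi < P(f,\tvarphi) = P(f,\varphi)$, which is~2; and applying this to any potential co-homologous to~$\varphi$ (hence to~$\tvarphi$) shows it too satisfies~2, hence~1, giving~5. Finally $5\Rightarrow1$ is immediate since~$\varphi$ is co-homologous to itself. I do not expect a serious obstacle: the single genuinely creative step is spotting the transfer function~$h$ above (after which it is verified by routine telescoping), and the only places needing care are the two weak-$*$ compactness arguments (invariance of the Krylov--Bogolyubov limit, and $h_{\mu_\infty}(f)\ge0$).
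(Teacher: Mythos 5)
Your proposal is correct and follows essentially the same route as the paper: the same Krylov--Bogolyubov empirical-measure argument for~\eqref{e:invariant supremum}, the same telescoping transfer function $h=-\frac1n\sum_{j=0}^{n-2}(n-1-j)\varphi\circ f^j$ making $\frac1n S_n(\varphi)$ co-homologous to~$\varphi$, and the same logical structure for the equivalences. The only cosmetic differences are that you invoke Fekete's subadditivity lemma where the paper gets the first equality by sandwiching, and you route $4\Rightarrow 2$ through equality of integrals of co-homologous potentials where the paper uses equality of their equilibrium states together with property~3.
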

We remark that in many situations the potential~$\tvarphi$ in property~$4$ can be taken to be as regular as~$\varphi$ and~$f$; in fact, in the proof we show that for a sufficiently large~$n$ we can take $\tvarphi = \frac{1}{n} S_n(\varphi)$.

The proof of this proposition is given after the following corollary.
For a map~$f$ as in the statement of the proposition, we denote by~$\htop(f)$ its topological entropy.
\begin{cor}
Let~$X$ be a compact metric space and let~$f : X \to X$ be a continuous map.
Then~$f$ has a hyperbolic potential if and only if~$\htop(f) > 0$.
\end{cor}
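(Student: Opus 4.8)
The plan is to derive both implications directly from the variational principle for topological entropy, combined with the equivalence of properties~$1$ and~$2$ in Proposition~\ref{p:continuous hyperbolicity}.

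First I would prove that $\htop(f) > 0$ implies the existence of a hyperbolic potential by exhibiting the constant potential $\varphi \equiv 0$. Indeed, $\sup_{\mu \in \mathcal{M}(X, f)} \int \varphi \, d\mu = 0$, whereas by the variational principle $P(f, 0) = \sup_{\mu \in \mathcal{M}(X, f)} h_\mu(f) = \htop(f) > 0$. Hence property~$2$ of Proposition~\ref{p:continuous hyperbolicity} holds for $\varphi$, and so $\varphi$ is hyperbolic for $f$ by that same proposition.

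For the converse, I would suppose $\varphi : X \to \R$ is a hyperbolic potential. By the equivalence $1 \Leftrightarrow 2$ in Proposition~\ref{p:continuous hyperbolicity},
$$ c \= \sup_{\mu \in \mathcal{M}(X, f)} \int \varphi \, d\mu \; < \; P(f, \varphi) = \sup_{\mu \in \mathcal{M}(X, f)} \left( h_\mu(f) + \int \varphi \, d\mu \right). $$
Since the right-hand side is a supremum strictly larger than $c$, there is $\mu \in \mathcal{M}(X, f)$ with $h_\mu(f) + \int \varphi \, d\mu > c$; as $\int \varphi \, d\mu \le c$, this forces $h_\mu(f) > c - \int \varphi \, d\mu \ge 0$. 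The variational principle then gives $\htop(f) \ge h_\mu(f) > 0$.

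I do not anticipate a genuine obstacle here: the content of the statement is just that the strict inequality in property~$2$ of Proposition~\ref{p:continuous hyperbolicity} cannot hold unless some invariant measure has positive entropy. The only point requiring a little care is that in this generality an equilibrium state for $\varphi$ need not exist, since the entropy map may fail to be upper semicontinuous; so one should argue from property~$2$ and the definition of the pressure as a supremum rather than from property~$3$.
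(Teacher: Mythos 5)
Your proof is correct and follows essentially the same route as the paper: the reverse implication is verified with the constant potential $\varphi \equiv 0$ exactly as in the text, and the forward implication rests on Proposition~\ref{p:continuous hyperbolicity} together with the variational principle. The only difference is that the paper invokes property~$3$ (positive entropy of equilibrium states) for the forward direction, while you argue from property~$2$ and the definition of the pressure as a supremum; your variant sidesteps the issue of existence of equilibrium states that you rightly flag, whereas the paper's phrasing is justified by the observation, implicit in its proof of $3 \Rightarrow 2$, that when $\sup_{\mu} \int \varphi \, d\mu = P(f,\varphi)$ a zero-entropy equilibrium state does exist, because the supremum of $\mu \mapsto \int \varphi \, d\mu$ is attained on the weak* compact set $\mathcal{M}(X,f)$.
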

\begin{proof}
The direct implication follows from property~$3$ of Proposition~\ref{p:continuous hyperbolicity} and from the variational principle.
To prove the reverse implication, note that if~$\htop(f) > 0$ and if we denote by~$\varphi : X \to \R$ the constant potential equal to~$0$, then~$\varphi$ is hyperbolic for~$f$:
$$ P(f, \varphi)
=
\htop (f)
>
0
=
\sup_X \varphi. $$
\end{proof}
\begin{rem}
\label{hyperbolic remark}
When~$\htop(f) > 0$ there always exists a hyperbolic potential~$\varphi$ for which~\eqref{e:hyperbolicity} fails for~$n = 1$:
$$ \sup_X \varphi > P(f, \varphi). $$
To find such~$\varphi$, let~$x$ be a point in~$X$ which is not fixed by $f$ and let $h:~X\rightarrow \R$ be a continuous function such that $h(x)-~h(f(x)) > \htop(f)$.
Then the potential $\varphi \= h - h\circ f$ is co\nobreakdash-homologous to the constant potential equal to~$0$.
Thus~$\varphi$ is hyperbolic, $P(f, \varphi) = \htop(f)$ and we have,
$$ \sup_X \varphi 
\geq
h(x) - h(f(x))
>
\htop(f)
=
P(f,\varphi). $$
\end{rem}
\begin{proof}[Proof of Proposition~\ref{p:continuous hyperbolicity}]
The inequality in~\eqref{e:invariant supremum} is a direct consequence of the definition of~$P(f, \varphi)$.
To prove the equalities in~\eqref{e:invariant supremum}, note first that
% the sequence $\{\sup_{X}S_n(\varphi)\}_{n\geq 1}$ is sub-additive and hence that the limit    
%          $\displaystyle\lim_{n\rightarrow + \infty} \sup_{X}\frac{1}{n}S_n(\varphi)$ exists.
% Moreover,
for each integer~$n \ge 1$ and each measure $\mu\in~\mathcal{M}(X,f)$ we have
$$\int S_n(\varphi) d\mu= \sum_{k=0}^{n-1}\int \varphi\circ f^k d\mu = n\int \varphi d\mu,$$
so
$$\int \varphi d\mu\leq \sup_{X}\frac{1}{n} S_n(\varphi).$$
Taking the supremum over~$\mu$ in~$\mathcal{M}(X, f)$ and then the infimum over~$n$ in~$\{1, 2, \ldots \}$, we get
		      \begin{equation}\label{eq1}
		      \sup_{\mu\in\mathcal{M}(X,f)}\int \varphi d\mu
\leq
\inf \left( \sup_{X}\frac{1}{n}S_n(\varphi) \right)_{n = 1}^{+ \infty} .
\end{equation}
So, to complete the proof of~\eqref{e:invariant supremum} it is enough to show that
$$ \limsup_{n \to + \infty} \sup_X \frac{1}{n} S_n(\varphi)
\le
\sup_{\mu \in \mathcal{M}(X, f)} \int \varphi d\mu. $$
To do this, for each integer $n\geq 1$ let $x_n\in X$ be such that
$$\frac{1}{n}S_n(\varphi)(x_n)=\sup_{X}\frac{1}{n}S_n(\varphi) $$
and put
$$\mu_n\=\frac{1}{n}\sum_{j=0}^{n-1}\delta_{f^{j}(x_n)}.$$
		 Observe that $\mu_n(\varphi)=\frac{1}{n}S_n(\varphi)(x_n)= \sup_{X}\frac{1}{n}S_{n}\varphi$.
Choose a sequence~$(n_m)_{m = 1}^{+ \infty}$ of positive integers such that 
$$ \lim_{m\rightarrow + \infty}\frac{1}{n_m}S_{n_m}\varphi(x_{n_m})
=
\limsup_{n\rightarrow + \infty}\sup_{X} \frac{1}{n} S_n(\varphi).$$
Taking a subsequence if necessary we assume that the sequence $(\mu_{n_m})_{m = 1}^{+ \infty}$ converges in the weak* topology to a measure~$\mu$ in~$\mathcal{M}(X,f)$.
Then we have
\begin{multline*}\label{eq2}
\int \varphi d\mu
=
\lim_{m\rightarrow + \infty}\int \varphi d\mu_{n_m}
=
\lim_{m\rightarrow + \infty}\frac{1}{n_m} S_{n_m}\varphi(x_{n_m})
\\=
\limsup_{n\rightarrow + \infty }\sup_{X}\frac{1}{n}S_n(\varphi).
\end{multline*}
Together with~\eqref{eq1} this implies the equalities in~\eqref{e:invariant supremum} and completes the proof of~\eqref{e:invariant supremum}.

We will now prove the equivalence of properties~$1$--$5$.
The second equality in~\eqref{e:invariant supremum} easily implies that properties~$1$ and~$2$ are equivalent.
To prove the implication~$2 \Rightarrow 3$, let~$\varphi$ be a hyperbolic potential for~$f$ and let~$\mu$ be an equilibrium state of~$f$ for the potential~$\varphi$.
Then we have
$$ h_\mu(f)
=
P(f, \varphi) - \int \varphi d \mu
\ge
P(f, \varphi) - \sup_{\mu' \in \mathcal{M}(X, f)} \int \varphi d\mu'
> 0. $$
To prove implication~$3 \Rightarrow 2$ we proceed by contradiction and assume
$$ \sup_{\mu \in \mathcal{M}(X, f)} \int \varphi d\mu = P(f, \varphi). $$
This implies that there is an invariant probability measure~$\mu_0$ such that~$\int \varphi d\mu_0 = P(f, \varphi)$.
By definition of~$P(f, \varphi)$ we have,
$$ P(f, \varphi)
=
\int \varphi d\mu_0
\le
\int \varphi d \mu_0 + h_{\mu_0}(f)
\le
P(f, \varphi), $$
so~$\mu_0$ is an equilibrium state of~$f$ for the potential~$\varphi$ and $h_{\mu_0}(f) = 0$.
This completes the proof of the implication~$3 \Rightarrow 2$.

So far we have shown that properties~$1$, $2$ and~$3$ are equivalent.
To complete the proof of the proposition, first observe that, since for each continuous potential~$\tvarphi$ co\nobreakdash-homologous to~$\varphi$ the equilibrium states of~$f$ for the potentials~$\varphi$ and~$\tvarphi$ coincide, by property~$3$ the potential~$\tvarphi$ is hyperbolic if and only if~$\varphi$ is.
We obtain as a direct consequence of this fact the implication~$4 \Rightarrow 1$ and the equivalence between~$5$ and~$1$. 
Finally, observe that the implication $2 \Rightarrow 4$ is a direct consequence of the second equality in~\eqref{e:invariant supremum} and of the fact that for each integer $n\geq 1$ the function~$\frac{1}{n}S_n(\varphi)$ is co\nobreakdash-homologous to $\varphi$: if we put
$$ h=-\frac{1}{n}\sum_{j=0}^{n-2}(n-1-j)\varphi\circ f^j , $$
then 
$$ \frac{1}{n}S_n(\varphi)= \varphi+ h-h\circ f. $$
This completes the proof of the proposition.
\end{proof}

\begin{proof}[Proof of the Main Theorem assuming the Key~Lemma]
% \begin{proof}[Proof of the \hyperref[main theorem]{Main Theorem} assuming the \hyperref[key lemma]{Key~Lemma}]
To prove the implication~$1 \Rightarrow 2$, let~$\varphi : J(f) \to \R$ be a hyperbolic potential and let~$\mu$ be an equilibrium state of~$f$ for the potential~$\varphi$.
Then by property~$3$ of Proposition~\ref{p:continuous hyperbolicity} we have~$h_\mu(f) > 0$, so by Ruelle's inequality we have
$$ \max \{ 2 \chi_\mu(f), 0 \} \ge h_{\mu}(f) > 0, $$
and thus~$\chi_\mu(f) > 0$; see for example~\cite{PrzUrb10,Rue78} for Ruelle's inequality.

To prove the reverse implication we proceed by contradiction: suppose there is a continuous and non\nobreakdash-hyperbolic potential $\varphi : J(f) \to \R$ such that the Lyapunov exponent of each of its equilibrium states is strictly positive.
Then by property~$3$ of Proposition~\ref{p:continuous hyperbolicity} there exists an equilibrium state of~$f$ for the potential~$\varphi$ such that~$h_{\mu_0}(f) = 0$.
Therefore~$\int \varphi d\mu_0 = P(f, \varphi)$ and by hypothesis~$\chi_{\mu_0}(f) > 0$.
Replacing~$\mu_0$ by one of its ergodic components if necessary, we assume~$\mu_0$ ergodic.
Then we can apply the~\hyperref[key lemma]{Key~Lemma} with $t=0$ and~$\mu = \mu_0$ and obtain that $\int\varphi d\mu_0 < P(f,\varphi)$.
This contradiction completes the proof of the implication~$2 \Rightarrow 1$.

Finally, we prove~$1 \Rightarrow 3$.
Let~$\varphi$ be a hyperbolic potential, so there is an integer $n \geq 1$ such that the H{\"o}lder continuous potential~$\tvarphi \= \frac{1}{n}S_{n}(\varphi)$ satisfies $\sup_{\julia} \tvarphi < P(f, \varphi)$.
Since~$\tvarphi$ is co\nobreakdash-homologous to~$\varphi$ (\emph{cf.} the proof of Proposition~\ref{p:continuous hyperbolicity}), we have~$P(f, \tvarphi) = P(f, \varphi)$ and the equilibrium states of~$f$ for the potentials~$\tvarphi$ and~$\varphi$ coincide.
Thus~$\sup_{\julia} \tvarphi < P(f, \tvarphi)$ and the combination of the works of Denker, Przytycki and Urba\'nski in~\cite{DenUrb91e,DenPrzUrb96} and of Haydn~\cite{Hay99} imply property~$3$ for the potential~$\tvarphi$, and hence for the potential~$\varphi$.
\end{proof}
\begin{proof}[Proof of Corollary~\ref{c:after freezing}]
Note that by the definition of~$t_+$, for each~$t \in [t_+, + \infty)$ we have~$P(f, - t \ln |f'|) = - t \chiinf$.

To prove part~$1$, observe that the \hyperref[key lemma]{Key Lemma} with~$\varphi$ equal to the constant function equal to~$0$ and~$t = t_+$, implies that for each invariant probability measure on~$J(f)$,
$$ - t_+ \chiinf
=
P(f, - t_+ \ln |f'|)
>
- t_+ \chi_{\mu}(f). $$
Thus $\chi_{\mu}(f) > \chiinf(f)$, as wanted.

To prove part~$2$, let~$t \in (t_+, + \infty)$ and let~$\mu$ be an invariant probability measure on~$J(f)$.
Then by part~$1$,
\begin{multline*}
h_{\mu}(f) - t \chi_{\mu}(f)
=
h_{\mu}(f) - t_+ \chi_{\mu}(f) - (t - t_+) \chi_{\mu}(f)
\\ \le
P(f, - t_+ \ln |f'|) - (t - t_+) \chi_{\mu}(f)
=
- t_+ \chiinf(f) - (t - t_+) \chi_{\mu}(f)
\\ <
- t \chiinf(t)
\le
P(f, - t \ln |f'|).
\end{multline*}
So~$\mu$ is not an equilibrium state of~$f$ for the potential~$-t \ln |f'|$.

It remains to prove part~$3$.
Since by definition of the Topological Collet-Eckmann condition the Lyapunov exponent of each invariant probability measure on~$J(f)$ is strictly positive, by \cite[Corollary~$11$]{Dob0804} there is at most one equilibrium state of~$f$ for the potential~$- t_+ \ln |f′|$, see also~\cite{Led84}.
If such a measure~$\mu$ exists, then we have
$$ h_{\mu}(f)
=
P(f, - t_+ \ln |f'|) + t_+ \chi_{\mu}(f)
=
t_+ (\chi_{\mu}(f) - \chiinf(f)) > 0. $$
On the other hand, since for each~$t$ we have
$$ P(f, - t \ln |f'|) \ge h_{\mu} - t \chi_{\mu} = P(f, - t_+ \ln |f'|) - (t - t_+) \chi_\mu(f), $$
we obtain
\begin{multline*}
\limsup_{t \to (t_+)^-} \frac{P(f, -t \ln |f'|) - P(f, -t_+ \ln |f'|)}{t - t_+}
\le
- \chi_{\mu}(f)
\\ <
\chiinf(f)
=
\lim_{t \to (t_+)^+} \frac{\partial}{\partial t} P(f, - t \ln |f'|).
\end{multline*}
\end{proof}

\section{The periodic case}
\label{s:periodic case}
This section is devoted to the special case of the \hyperref[key lemma]{Key Lemma}, where the measure~$\mu$ is supported on a periodic orbit; we state it as Proposition~\ref{p:periodic case} below.
The proof in the non\nobreakdash-periodic case is based on some of the same ideas as in the periodic case, but it is more elaborated.
\begin{prop}
\label{p:periodic case}
Let~$f$ be a rational map of degree at least~$2$, $\varphi : \julia \to \R$ a H{\"o}lder continuous potential and let $t \ge 0$.
Then for each integer~$n \ge 1$ and each repelling periodic point~$z_0$ of~$f$ of period~$n$ we have
\begin{equation}
P(f,\varphi - t \ln |f'|)
>
\frac{1}{n} S_n(\varphi)(z_0) - t \ln |(f^n)'(z_0)|.
\end{equation}
\end{prop}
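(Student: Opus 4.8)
The plan is to estimate the pressure from below by constructing, for each small parameter $\varepsilon>0$, an $f^n$-invariant measure supported near the periodic orbit of $z_0$ whose free energy is close to $\frac1n S_n(\varphi)(z_0)-t\ln|(f^n)'(z_0)|$, but which carries a definite amount of entropy. Since $z_0$ is repelling, the inverse branch $g$ of $f^n$ fixing $z_0$ is a uniform contraction on a ball $B=B(z_0,r)$; moreover, because $f$ has degree at least $2$ and $z_0$ is not exceptional for a suitable iterate (and even when it is, one can work around it), $f^{nk}$ has at least two univalent inverse branches on $B$ mapping $B$ strictly inside itself for $k$ large. These branches generate a conformal iterated function system whose limit set $\Lambda\subset B$ is a Cantor repeller of $f^{nk}$ on which the dynamics is conjugate to a full shift on at least two symbols; this gives $\htop(f^{nk}|_\Lambda)>0$.

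The key point is a distortion and proximity control. Fix $k$ large and let $\nu$ be the measure of maximal entropy (or any Bernoulli measure) on the shift, pushed to an $f^{nk}$-invariant measure $\nu$ on $\Lambda$, then averaged over the orbit to get an $f$-invariant measure $\mu_\varepsilon$. Using the Koebe Distortion Theorem quoted in the preliminaries, on the depth-$j$ cylinders the derivative of $f^{njk}$ differs from $|(f^{n})'(z_0)|^{jk}$ by a bounded factor, so $\int \ln|f'|\,d\mu_\varepsilon$ is within $O(\varepsilon)$ of $\ln|(f^n)'(z_0)|/n$ once $r$ is small (Koebe pins the branch derivatives to the fixed-point multiplier up to a factor that shrinks to $1$ as the branches are deeper, i.e. as $B$ is small). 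Similarly, since $\varphi$ is Hölder continuous and the diameters of the cylinders decay geometrically, $\int \varphi\,d\mu_\varepsilon$ is within $O(\varepsilon)$ of $\frac1n S_n(\varphi)(z_0)$. Therefore, by the variational principle applied to $f$ and the potential $\varphi-t\ln|f'|$,
\begin{equation*}
P(f,\varphi-t\ln|f'|)
\ \ge\
h_{\mu_\varepsilon}(f)+\int\bigl(\varphi-t\ln|f'|\bigr)\,d\mu_\varepsilon
\ \ge\
\tfrac{1}{nk}\htop(f^{nk}|_\Lambda)+\tfrac1n S_n(\varphi)(z_0)-t\ln|(f^n)'(z_0)|-C\varepsilon,
\end{equation*}
and choosing $\varepsilon$ small enough that $C\varepsilon<\frac{1}{nk}\htop(f^{nk}|_\Lambda)$ yields the strict inequality. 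Note the role of $t\ge 0$: the term $-t\int\ln|f'|\,d\mu_\varepsilon$ is bounded (the branches are univalent, so no critical points are involved), so the argument is uniform in $t$ on compact sets, and the finiteness $P(f,\varphi-t\ln|f'|)<+\infty$ is not needed for a lower bound.

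The main obstacle is the construction of the two disjoint univalent inverse branches of a common iterate $f^{nk}$ mapping a fixed small ball $B$ around $z_0$ into itself, together with uniform distortion. This is exactly where one must invoke that the backward orbit of $z_0$ is infinite (non-exceptionality), so that some preimage of $z_0$ under an iterate lies in $B$ with a univalent branch, giving a second generator besides $g^k$. When $z_0$ happens to be exceptional this fails and one genuinely needs a separate argument — which is why the excerpt isolates the periodic case and, within it, presumably handles the exceptional periodic orbits by a direct computation rather than via this IFS. So in the write-up I would first dispose of the exceptional case by hand (there are only finitely many exceptional points, and one can perturb to a nearby repelling cycle of the same iterate that is non-exceptional, or compare pressures directly), and then run the IFS-plus-Koebe argument above in the generic case.
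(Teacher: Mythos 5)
Your horseshoe--plus--variational-principle argument is essentially sound in spirit for a non-exceptional repelling point, but even there the bookkeeping as written does not close. With two inverse branches of a common iterate $f^{nk}$ and the measure of maximal entropy on the $2$-shift, the entropy contributed to the $f$-invariant measure is $\ln 2/(nk)$, while the error in $\int\varphi\,d\mu_\varepsilon$ caused by the second branch contains a term of order $N/(nk)$, where $N$ is the length of the excursion that branch makes away from the cycle before re-entering $B$. This is of the \emph{same} order as the entropy term, with a constant of order $N\|\varphi\|_\infty$ that does not become small when $r$ does (shrinking $r$ only forces $N$ to grow); so ``choose $\varepsilon$ small enough that $C\varepsilon<\frac{1}{nk}\htop(f^{nk}|_\Lambda)$'' is not available: $\varepsilon$ is not a free parameter. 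This is repairable (fix the excursion branch on a larger ball first, give the excursion symbol a small Bernoulli weight $p$ with $H(p)>2pC_N$, then shrink the working radius), and that reweighting is precisely the mechanism behind the paper's generating-function comparison, where the per-excursion penalty is paid once per excursion while excursions of arbitrary length near $z_0$ are allowed; but some such argument must be supplied.

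The decisive gap is the exceptional case, which you propose to ``dispose of by hand''. Neither suggested workaround works. The inequality to be proved is evaluated at $z_0$ itself; a nearby non-exceptional repelling cycle yields an inequality for \emph{its} Birkhoff averages, and passing to the limit along cycles shadowing the orbit of $z_0$ loses strictness, because the positive gap for the approximating cycles is not uniform; ``compare pressures directly'' is not an argument. Worse, within the paper this proposition exists \emph{exactly} for the exceptional periodic case: the IFS construction used for all other measures (Proposition~\ref{p:construction of IFS}) explicitly excludes measures charging exceptional points, so the non-exceptional periodic case you handle is not the one that is needed. In the exceptional case there may be no univalent second branch near $z_0$ at all: for $f(z)=z^2-2$ and $z_0=2$, every preimage of $2$ other than $\pm2$ passes through the critical point $0$, so every return branch is ramified. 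One must therefore allow the induced map to have a critical point on the second branch; then for $t>0$ the potential $-t\ln|f'|$ is unbounded on the induced domain, and your remark that this term is bounded ``because the branches are univalent'' fails. The paper's proof handles this with a different mechanism: periodic points $p_k$ spending $k-1$ induced steps near $z_0$ and one step through the critical preimage have Lyapunov exponent tending to $\frac{1}{d}\ln|(f^n)'(z_0)|$, where $d\ge2$ is the local degree at the critical preimage, so for $t>0$ the strict inequality comes from the drop $t\bigl(1-\frac1d\bigr)\ln|(f^n)'(z_0)|>0$ rather than from entropy. Your proposal has no substitute for this step.
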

\begin{proof}
   % Let $\varphi: \julia\rightarrow \R$ be a H{\"o}lder continuous potential, $n \ge 1$ an integer and~$z_0$ a repelling periodic point of~$f$ of period~$n$.
Replacing~$f$ by an iterate if necessary we assume~$z_0$ is a fixed point of $f$.
Let $\rho>0$ be sufficiently small so that there is a local inverse ${\phi}: B(z_0,\rho) \rightarrow \CC$ of~$f$ fixing~$z_0$.
Reducing~$\rho$ if necessary we assume 
$$ \phi \left(\overline{B(z_0,\rho)}\right) \subseteq B(z_0,\rho) $$
and that there is~$\theta \in (0, 1)$ such that~$\phi$ contracts distances at least by a factor of~$\theta$.
Since~$z_0$ is a repelling fixed point of~$f$ there is a point~$z_1$ in $B(z_0,\rho/2)$ different from~$z_0$ and an integer $N \geq 1$ such that $f^N(z_1)=z_0$.
Taking~$N$ larger if necessary we assume that the connected component~$U_1$ of $f^{-N}(B(z_0,\rho))$ containing~$z_1$ is contained in $B(z_0, \rho/2)$ and that its closure is disjoint from that of $U_0\= {\phi}^N(B(z_0,\rho))$.
Taking~$N$ even larger if necessary, we also assume that~$f^N$ has no critical points in $U_1\backslash \{z_1\}$.
    
Put
$$ U = U_0 \cup U_1
\text{, }
\widehat{f}\= f^N\vert_{U}:U\rightarrow B(z_0,\rho)
\text{ and }
\widehat{\varphi}= \frac{1}{N} S_N(\varphi). $$
Note that~$\widehat{f}$ maps each of the sets~$U_0$ and~$U_1$ properly onto $B(z_0,\rho)$ and that~$\widehat{f}$ has no critical points, except maybe for $z = z_1$.
For an integer $k\geq 1$, a function $\psi:\julia\rightarrow\R$ and a point~$x$ in the domain of definition of~$\widehat{f}^k$ we put
$$ \widehat{S}_k(\psi)(x)
=
\psi(x) + \psi\circ \widehat{f} (x) + \cdots + \psi \circ \widehat{f}^{k-1}(x).$$
We also put
$$ \widehat{K}
\=
\{x\in U:\textrm{ for every integer } k\geq 1 \textrm{ the map } \widehat{f}^k\textrm{ is defined at } x\}.$$
Then we have  
	          \begin{multline*}
	          P\left(f,\varphi-t\ln \vert f'\vert\right)=\frac{1}{N} P\left(f^N, S_N(\varphi)-t\ln \vert (f^N)'\vert\right)
\\ \geq
\frac{1}{N} P\left(\widehat{f}\vert_{\widehat{K}}, \widehat{\varphi}-t\ln            
              \left\vert\widehat{f}'\right\vert\right).
       	      \end{multline*} 
    So to prove the lemma we just need to prove that, 
    $$ P\left(\widehat{f}\vert_{\widehat{K}}, \widehat{\varphi}-t\ln \vert \widehat{f}' \vert\right)> \widehat{\varphi}(z_0)- 
    t\ln \left\vert \widehat{f}'(z_0)\right\vert.$$

To do this, consider the itinerary map
$$ \begin{array}{rcl}
\iota :  \widehat{K} & \rightarrow & \{0,1\}^{\N\cup\{0\}} \\
z & \mapsto & \iota(z)_0\iota(z)_1\cdots
\end{array} $$
in such a way that for every $j \in \N \cup \{ 0 \}$ we have $\widehat{f}^{j}(z)\in U_{{\iota(z)}_j}$.
% Since $\widehat{f}$  maps each of the sets~$U_0$ and~$U_1$ properly onto $B(z_0,\rho)$, it follows that ~$\iota$ is surjective.
For each integer $k\geq 0$ and each sequence $a_0, \ldots, a_k\in \{0,1\}$ we put 
$$ K(a_0 \cdots a_k)
\=
\left\{z\in\widehat{K}: \text{ for every } j \in \{ 0, \ldots, k \} \text{ we have } \iota(z)_{j}=a_j \right\}. $$
% and for every point~$x$ in this set we denote by $U_k(x)$ the connected component  of the domain of $\widehat{f}^k$ containing $x$.
% Note that if for every $j \in \{ 0,\cdots,k \}$ we have that $a_j=0$ or $a_k=1$, then $\widehat{f}^k$ maps $U_k(x)$ univalently onto~$U_{a_k}$.\marginpar{Does not make sense}
By our choice of~$\phi$ it follows that there is a constant~$\widehat{S} > 0$ such that for each integer~$k \ge 1$ and each point~$x$ in~$K(\underbrace{0 \cdots 0}_{k})$ we have
$$ |\widehat{S}_k(\widehat{\varphi})(x) - k \widehat{\varphi}(z_0)| < \widehat{S}. $$
Taking~$\widehat{S}$ larger if necessary we assume that for every point~$x$ in~$U$ we have
$$ |\widehat{\varphi}(x) - \widehat{\varphi}(z_0)| < \widehat{S}. $$

We have two cases.
      
\partn{Case~$1$} $t=0$.
Since~$\widehat{f}$ has no critical points, except maybe for~$z_1$, and since $\widehat{f}(z_1)= z_0$ is fixed by $\widehat{f}$, it follows that $\widehat{f}$ is semi-hyperbolic in the sense of~\cite{CarJonYoc94}.
Thus for each integer $k \geq 1$ and each sequence $a_0, \ldots, a_k \in \{0,1\}$ the diameter of each connected component of $K(a_0 \cdots a_k)$ is exponentially small in~$k$.
% It follows that there is a constant~$\widehat{S} > 0$ such that for each~$k$ and~$a_0, \ldots, a_k$ as above and for each pair of points~$x$ and~$x'$ in the same connected component of~$K(a_0\cdots a_k)$, we have 
%               \begin{equation}
%               \left\vert \widehat{S}_{k+1}(\widehat{\varphi})(x)-\widehat{S}_{k+1}(\widehat{\varphi})(x')\right\vert<\widehat{S}.
%               \end{equation}
%       Furthermore,
This implies that for each $x_0\in \widehat{K}$ we have 
$$ P\left(\widehat{f}\vert_{\widehat{K}},\widehat{\varphi}\right)
=
\lim_{n\rightarrow +\infty}\frac{1}{n}\ln \sum_{x\in \widehat{f}^{-n}(x_0)}\exp\left(\widehat{S}_n(\widehat{\varphi})(x)\right),$$
see for example~\cite[Lemma~$4.2$]{ComRiv11}.
Fix a point~$x_0$ in~$\widehat{K}$.
Then the radius of convergence~$R$ of the power series in the variable~$s$,
               \begin{equation}
               \Xi(s) \= \sum_{n=0}^{+ \infty} \left(
               \sum_{x\in \widehat{f}^{-n}(x_0)} 
               \exp\left(\widehat{S}_n(\widehat{\varphi})(x)\right)\right) s^n.
               \end{equation}
is equal to
$$R
=
\exp\left(-P\left(\widehat{f}\vert_{\widehat{K}}, \widehat{\varphi}\right)\right).$$
Thus we just need to show that~$R$ is strictly smaller than $\exp(-\widehat{\varphi}(z_0))$.

To do this, note first that, since~$\widehat{f}$ is proper, for each integer~$k \ge 0$ and each sequence~$a_0, \ldots, a_k \in \{0, 1 \}$ there is a point of~$\widehat{f}^{-k}(x_0)$ in~$K(a_0\ldots a_k)$.
We will prove now that for each integer $k \geq 1$, each sequence $a_0, \ldots, a_k \in \{0,1\}$ with~$a_0 = 1$ and each point~$x$ in~$K(a_0 \cdots a_k)$ we have
\begin{equation}
\label{e:distortion in case 1}
\widehat{S}_{k+1}(\widehat{\varphi})(x)
\geq 
(k+1)\widehat{\varphi}(z_0) - 2 (a_0 + \cdots + a_k) \widehat{S}.
\end{equation}
To prove this inequality, put~$\ell \= a_0 + \cdots + a_k$ and let~$i_1 = 0 < i_2 < \cdots < i_{\ell} \le k$ be all integers~$i$ in~$\{0, \ldots, k \}$ such that~$a_{i} = 1$.
We also put~$i_{\ell + 1} = k + 1$.
Then by our choice of~$\widehat{S}$, for each~$j \in \{ 1, \ldots, \ell \}$ we have
$$ \widehat{S}_{i_{j + 1} - i_j}(\widehat{f}^{i_j}(x)) \ge (i_{j + 1} - i_j) \widehat{\varphi}(z_0) - 2\widehat{S}. $$
Summing over~$j$ in~$\{ 1, \ldots, \ell \}$ we obtain~\eqref{e:distortion in case 1}.

To prove that~$R$ is strictly smaller than $\exp(-\widehat{\varphi}(z_0))$, we define the following power series in the variable~$s$,
$$ \Phi(s)
\=
\sum_{k = 1}^{+ \infty} \exp\left(k\widehat{\varphi}(z_0)-2\widehat{S}\right)s^k.$$
In view of~\eqref{e:distortion in case 1}, each of the coefficients of the series
$$ \Phi(s) + \Phi(s)^2 + \Phi(s)^3 + \cdots $$
is less than or equal to the corresponding coefficient of~$\Xi(s)$. 
On the other hand the radius of convergence of~$\Phi$ is equal to $\exp(-\widehat{\varphi}(z_0))$, so 
$$ \lim_{s\rightarrow \exp(-\widehat{\varphi}(z_0))^-} \Phi(s)= + \infty $$
and hence that there is $s_0 \in (0, \exp(-\widehat{\varphi}(z_0)))$ such that $\Phi(s_0)\geq 1.$
      It follows that the radius of convergence~$R$ of~$\Xi$ is less than or equal to~$s_0$.
We thus have,
$$ \exp \left( - P\left(\widehat{f}\vert_{\widehat{K}},\widehat{\varphi}\right) \right)
\le
R
\le
s_0
<
\exp( - \widehat{\varphi}(z_0)) $$
and hence $P\left(\widehat{f}\vert_{\widehat{K}},\widehat{\varphi}\right) > \widehat{\varphi}(z_0)$, as wanted.

\partn{Case~$2$} $t>0$. If $z_1$ is not a critical point of $\widehat{f}$ then $\ln\vert\widehat{f}'\vert$ is 
     bounded and H{\"o}lder continuous  on $U$, so this case follows from Case~$1$ applied to the H{\"o}lder 
     continuous potential $\widehat{\varphi}-t\ln\vert \widehat{f}'\vert$ instead of $\widehat{\varphi}$.
     So we assume that $z_1$ is a critical point of $\widehat{f}$.
Denote by~$d \geq 2$ the local degree
     of $\widehat{f}$ at $z_1$.
Let~$p_1$ be a fixed point of~$\widehat{f}$ in~$U_1$ and for each integer $k \geq 2$ let~$p_k$ be a fixed point of~$\widehat{f}^k$ contained in $K(\underbrace{0\cdots 0}_{k - 1}1)$.
By definition of~$\widehat{S}$, for each~$k \ge 1$ we have
$$ \widehat{S}_k(\widehat{\varphi})(p_k)
\geq
k \widehat{\varphi}(z_0) - 2\widehat{S}. $$
In particular we have
$$ \liminf_{k \rightarrow + \infty} \frac{1}{k} S_k(\widehat{\varphi})(p_k)
\ge
\widehat{\varphi}(z_0). $$
On the other hand a direct computation shows
$$ \lim_{k\rightarrow + \infty}\frac{1}{k} \ln \left\vert\left(\widehat{f}^k\right)'(p_k)\right\vert
=
\frac{1}{d} \ln \left\vert \widehat{f}'(z_0) \right\vert. $$
It follows that for a sufficiently large~$k$
             \begin{multline*}
P\left(\widehat{f}\vert_{\widehat{K}},\widehat{\varphi}-t\ln\left\vert \widehat{f}'\right\vert\right)\\ 
\geq
\frac{1}{k} \widehat{S}_k(\widehat{\varphi})(p_k) - t \frac{1}{k} \ln \left\vert\left(\widehat{f}^k \right)'(p_k)\right\vert\\
              > \widehat{\varphi}(z_0)-t\ln\vert \widehat{f}'(z_0)\vert.
             \end{multline*}
     % Here it is sufficient to consider individual branch since the lost of $\frac{1}{k}\widehat{S}$ is compensated when the trajectory 
     % passes close to critical set, due to $\deg_{\loc}(\widehat{f})\geq 2$.
\end{proof}

\section{Iterated Function Systems}
\label{s:IFSs}
In this section we prove the \hyperref[key lemma]{Key~Lemma} assuming the existence of a suitable ``Iterated Function System'' generated by the rational map.
We state this fact as Proposition~\ref{p:construction of IFS}, below.
Its proof is postponed to~\S\ref{s:construction of IFS}.

After fixing some notations in~\S\ref{symbolic space}, we give the statement of Proposition~\ref{p:construction of IFS} in~\S\ref{ss:IFSs}.
The proof of the~\hyperref[key lemma]{Key~Lemma} is given in~\S\ref{proof of key lemma}, after some preparatory considerations in~\S\ref{Cantor repellers}.  
\subsection{Symbolic space}
\label{symbolic space}
Let~$\Sigma \= \{1,2,\cdots\}^{\N}$ be the space of all infinite words in the alphabet~$\N$ and for each $n\in\N$ define
$$ \Sigma_n\=\{1,2,\cdots \}^n $$
and put
$$ \Sigma^*\= \bigcup_{n\geq 1}\Sigma_n. $$
Given an integer~$n \ge 1$ and a sequence $\ell_1\cdots\ell_n$ in~$\Sigma_n$ we call~$n$ \emph{the length of $\ell_1 \cdots \ell_n$} and denote it by $\vert \ell_1 \cdots \ell_n \vert$. 
\subsection{Iterated Function Systems}
\label{ss:IFSs}
  An infinite sequence of pairwise distinct holomorphic maps $(\phi_\ell)_{\ell = 1}^{+ \infty}$ is called an \emph{Iterated Function System (IFS)}, if there are $z_0\in\CC$ and $\rho>0 $ such that for each $\ell\geq 1$ the map $\phi_\ell$ is defined on $B(z_0,\rho)$ and takes images in $B(z_0,\rho/2)$.
Then for each integer~$n \ge 1$ and word $\ell_1 \cdots \ell_n \in \Sigma_n$ we put
$$ \phi_{\ell_1 \cdots \ell_n}\=\phi_{\ell_1}\circ\cdots\circ \phi_{\ell_n}. $$
We say such an IFS is \emph{free} if for every pair of distinct finite words~$\underline{\ell}, \underline{\ell}' \in \Sigma^*$ the maps~$\phi_{\underline{\ell}}$ and~$\phi_{\underline{\ell}'}$ are distinct.

Given a rational map $f$ of degree at least~$2$, we say that such 
  an IFS is \emph{generated} by $f$, if there is a sequence of positive integers $(m_{\ell})_{\ell = 1}^{+ \infty}$ such that for each~$\ell$ the map~$f^{m_{\ell}}\circ\phi_{\ell}$ is the identity on $B(z_0,\rho)$~; in this case~$\phi_{\ell}$ is the inverse branch of~$f^{m_\ell}$ defined on~$B(z_0, \rho)$ and maps~$z_0$ to~$\phi_{\ell}(z_0)$.
We call~$(m_{\ell})_{n = 1}^{+ \infty}$ the \emph{time sequence of $(\phi_{\ell})_{\ell = 1}^{+ \infty}$} and for each integer~$n \ge 1$ and each finite word~$\ell_1 \cdots \ell_n \in \Sigma_n$ we put
$$ m_{\ell_1 \cdots \ell_n}\= m_{\ell_1}+\cdots +m_{\ell_n}. $$      
%        \begin{defn}
%         Let $f$ be a  rational map of degree at least~$2$ and let~$z_0 \in \CC$, $\rho > 0$ and let $(\phi_\ell)_{\ell = 1}^{+ \infty}$ be an IFS generated by~$f$ defined on $B(z_0,\rho)$.
% For each integer~$\ell \ge 1$ we denote by~$m_{\ell}$ the integer such that~$f^{m_\ell} \circ \phi_{\ell}$ is the identity on~$B(z_0, \rho)$.
Furthermore, we say that $(\phi_\ell)_{\ell = 1}^{+ \infty}$ is \emph{hyperbolic with respect to~$f$} if there are constants $C>0$ and $\lambda >1$ such that for all $z\in B(z_0,\rho)$, $\underline{\ell}\in \Sigma^*$ and~$j\in\{1,\cdots, m_{\underline{\ell}}\}$ we have 
$$\left\vert( f^{j})'\left(f^{m_{\underline{\ell}}-j}(\phi_{\underline{\ell}}(z))\right) \right\vert
\geq
C\lambda^j.$$

Given a point $z\in~\CC$ and an integer $n \ge 1$ we will say that a point $y\in f^{-n}(z)$ is an \emph{unramified pre-image of~$z$ by~$f^n$} if $(f^{n})'(y)\neq 0.$
We will say~$z$ is \emph{exceptional} if it has at most finitely many unramified pre-images.
An exceptional point can have at most~$4$ unramified preimages, see~\cite[Proposition~$2.2$]{McM00} and also~\cite{MakSmi00}.
\begin{prop}
\label{p:construction of IFS}
Let~$f$ be a rational map of degree at least~$2$, $\rho_0 > 0$ be given by the Koebe Distortion Theorem, $\varphi : \julia \to \R$ a H{\"o}lder continuous potential and $t\geq 0$ and put~$\psi \= \varphi - t \ln |f'|$.
Let~$\mu$ be an ergodic invariant probability measure not charging an exceptional point and whose Lyapunov exponent is strictly positive.
Then there are~$C > 0$, ~$z_0 \in \CC$, $\rho \in (0, \rho_0)$ and a free IFS $(\phi_\ell)_{\ell = 1}^{+ \infty}$ generated by~$f$, which is defined on $B(z_0,\rho)$, is hyperbolic with respect to~$f$ and such that, if we denote by~$(m_{\ell})_{\ell = 1}^{+ \infty}$ its time sequence, then for every integer $\ell\geq 1$ we have
\begin{equation}\label{e:key estimate}
S_{m_{\ell}}(\psi)(\phi_{\ell}(z_0))
\ge
m_{\ell}\int \psi d\mu - C.
\end{equation}
\end{prop}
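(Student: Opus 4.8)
The plan is to build the IFS by a Birkhoff-ergodic-theorem argument, using the positivity of the Lyapunov exponent to get hyperbolicity and the ergodic averages of $\psi$ to get the estimate~\eqref{e:key estimate}. Since $\mu$ is ergodic with $\chi_\mu(f) > 0$ and $\mu$ does not charge an exceptional point, one first invokes a Pliss-type / Mañé-type lemma to produce, for $\mu$-a.e. point $x$, infinitely many ``hyperbolic times'' $n$ at which the inverse branch $f^{-n}$ along the orbit of $x$ is univalent on a ball of definite radius around $f^n(x)$ and contracts at a uniform exponential rate. Concretely, fix $\chi \in (0, \chi_\mu(f))$; by Birkhoff, $\frac1n \ln |(f^n)'(x)| \to \chi_\mu(f)$ and $\frac1n S_n(\psi)(x) \to \int \psi \, d\mu$ for $\mu$-a.e. $x$. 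Egorov's theorem gives a set $Y$ of positive $\mu$-measure on which both convergences are uniform; choosing a density point $z_0$ of $Y$ (avoiding the critical orbit and exceptional points, which is possible since $\mu$ charges neither) and $\rho \in (0,\rho_0)$ small, one finds infinitely many times $n$ and points $x \in Y \cap B(z_0, \rho)$ with $f^n(x) \in B(z_0, \rho)$ such that the pullback of $B(z_0,\rho)$ along $x, f(x), \dots, f^n(x)$ is univalent — here one needs a ``telescope'' or shrinking-neighborhoods argument à la Denker–Urbański/Przytycki controlling how close the orbit comes to $\mathrm{Crit}(f)$, which is where non-exceptionality and the positivity of $\chi_\mu$ combine.

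The maps $\phi_\ell$ are then these inverse branches of $f^{m_\ell}$ (with $m_\ell = n$ the corresponding hyperbolic time), chosen so that $\phi_\ell(z_0)$ lands back in a point of $Y \cap B(z_0, \rho/2)$, which guarantees both the containment condition $\phi_\ell(B(z_0,\rho)) \subset B(z_0,\rho/2)$ (shrink $\rho$ and use the contraction) and, via the Koebe Distortion Theorem applied on $B(f^{m_\ell}(\text{base}), \rho)$, that $S_{m_\ell}(\psi)$ along this branch differs from $m_\ell \int \psi\, d\mu$ by a bounded amount: $|S_{m_\ell}(\psi)(\phi_\ell(z_0)) - S_{m_\ell}(\psi)(\text{endpoint in } Y)|$ is controlled by bounded distortion plus Hölder continuity of $\varphi$ and the $\ln|f'|$ term, while $|S_{m_\ell}(\psi)(\text{endpoint}) - m_\ell \int \psi\, d\mu|$ is bounded by uniformity on $Y$. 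This gives~\eqref{e:key estimate} with a uniform $C$. Hyperbolicity with respect to $f$, i.e.\ $|(f^j)'(f^{m_{\underline\ell}-j}(\phi_{\underline\ell}(z)))| \ge C\lambda^j$ for all intermediate $j$ and all words $\underline\ell$, follows because hyperbolic times are preserved under concatenation (the defining inequality at a hyperbolic time propagates to all later times of the composed orbit) together with Koebe distortion transferring derivative estimates from $z_0$ to arbitrary $z \in B(z_0,\rho)$.

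Freeness of the IFS — that $\phi_{\underline\ell} \ne \phi_{\underline\ell'}$ for distinct finite words — is arranged by passing to a subsequence of the generating branches so that the time sequence $(m_\ell)$ grows fast enough (say $m_{\ell+1} > \sum_{k \le \ell} m_k$), which forces distinct words to have either distinct total times $m_{\underline\ell}$ or, when times agree, genuinely distinct branch data; alternatively one arranges that the images $\phi_\ell(z_0)$ are pairwise distinct and uses injectivity of the branches. I expect the main obstacle to be the construction of the univalent inverse branches at hyperbolic times with a uniform domain $B(z_0,\rho)$ — that is, the ``shrinking neighborhoods'' bookkeeping that shows the orbit from $x$ to $f^n(x)$ does not force the pullback domains to shrink, which requires a careful quantitative use of non-exceptionality (bounding the backward-orbit clustering near critical values) and is the technical heart deferred to~\S\ref{s:construction of IFS}. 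Everything else — the ergodic-theoretic selection, the Koebe distortion estimates, and the concatenation argument for hyperbolicity — is comparatively routine once that is in hand.
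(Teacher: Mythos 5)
There are two concrete gaps here, and they are exactly at the points where the paper has to work hardest. First, your mechanism for \eqref{e:key estimate} does not give a constant~$C$. Egorov plus Birkhoff yields, on your set~$Y$, only the uniform bound $\bigl|\tfrac{1}{n}S_n(\psi)(x)-\int\psi\,d\mu\bigr|\le\epsilon_n$ with $\epsilon_n\to 0$, i.e.\ an error of size $n\epsilon_n$ in the Birkhoff \emph{sum}, which may be unbounded. The estimate \eqref{e:key estimate} genuinely needs a deficit bounded independently of~$\ell$: in the power-series argument of \S\ref{proof of key lemma}, the strict inequality comes from the fact that infinitely many generators each contribute a weight at least $e^{-(C_0+C)}e^{m_\ell\int\psi\,d\mu}$, so that $\Phi$ diverges at $s=\exp(-\int\psi\,d\mu)$; with errors $m_\ell\epsilon_{\ell}=o(m_\ell)$ but unbounded, $\Phi$ can stay below~$1$ on the whole interval and one only recovers the non-strict inequality $P(f,\psi)\ge\int\psi\,d\mu$. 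The paper obtains the constant-deficit times from a recurrence-type result for centered Birkhoff sums along backward orbits (the fact that $\limsup_m\sum_{j<m}(\psi\circ\Pi\circ F^{-j}-\int\psi\,d\mu)\ge 0$ almost surely, cited from~\cite{PrzRiv0806v2}), applied in the natural extension, where \cite[Theorem~$10.2.3$]{PrzUrb10} also supplies the univalent inverse branches on a uniform ball that you acknowledge but defer; so the ``technical heart'' you leave open is handled there by Pesin theory for the inverse limit rather than by hyperbolic times and shrinking neighborhoods.

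Second, your freeness argument does not work as stated. A super-increasing time sequence does not separate distinct words (permuted words have the same total time), and pairwise distinct images $\phi_\ell(z_0)$ together with injectivity do not exclude coincidences of compositions such as $\phi_{1}\circ\phi_{2}=\phi_{2}\circ\phi_{1}$; this is a real danger because all your branches are pullbacks along essentially the same orbit data. The paper needs an extra idea: it passes to returns accumulating on a backward orbit $(y_n)$, inserts a fixed ``detour'' inverse branch $\Psi_0$ of $f^M$ whose endpoint $x_\ell$ is \emph{different} from the natural continuation $z_{n_\ell+M}$ of the backward orbit, and then proves freeness (Lemma~\ref{l:freeness}) by applying a suitable forward iterate that sends $\phi_{\underline{\ell}}(z_0)$ to $x_{\ell_k}$ but $\phi_{\underline{\ell}'}(z_0)$ to $z_{n_{\ell_k}+M}$. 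Moreover, when the accumulation orbit consists of exceptional points this detour is forced through a critical point of $f^M$, and uniform expansion must be recovered by estimating the frequency of visits of $z_{n_\ell}$ to a small neighborhood of $y_0$ (Case~$2$ of \S\ref{s:construction of IFS}); your proposal, which never confronts the exceptional case beyond choosing $z_0$ off the exceptional set, has no substitute for either of these steps. So the outline is in the right spirit, but both the constant in \eqref{e:key estimate} and freeness/hyperbolicity of the IFS require arguments you have not supplied.
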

We will use the following well-known distortion lemma, see for example
% ~\cite[Lemma~$31$]{Dob} or
~\cite[Lemma~$3.4.2$ \& Lemma~$5.2.2$]{PrzUrb10}.
The proof uses the H{\"o}lder continuity of the potential~$\varphi$ and the Koebe Distortion Theorem.
% for the potential $\ln\vert (f^n)'\vert$.
\begin{lem}[Bounded distortion of a hyperbolic IFS]
\label{lem1}
Let~$f$ be a rational map of degree at least~$2$, $\rho_0 > 0$ given by Koebe Distortion Theorem, $\varphi : \julia \to \R$ a H{\"o}lder continuous potential and~$t \ge 0$ and put~$\psi = \varphi - t \ln |f'|$.
Furthermore, let $z_0 \in \CC$, $\rho \in (0, \rho_0)$, let $(\phi_\ell)_{\ell = 1}^{+ \infty}$ be a IFS generated by~$f$ defined on~$B(z_0,\rho)$ and let~$(m_{\ell})_{\ell = 1}^{+ \infty}$ be its time sequence.
If $(\phi_\ell)_{\ell = 1}^{+ \infty}$ is hyperbolic with respect to~$f$, then there exists $C_0 > 0$ such that for each $\underline{\ell} \in \Sigma^*$ and $\xi, \xi' \in B(z_0,\rho/2)$
		    \begin{equation}
		     \left\vert S_{m_{\underline{\ell}}}(\psi)\left(\phi_{\underline{\ell}}(\xi)\right)-S_{m_{\underline{\ell}}}
		     (\psi)\left(\phi_{\underline{\ell}}(\xi')\right)\right\vert\leq C_0.
		    \end{equation}
	    \end{lem}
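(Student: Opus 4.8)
The statement to prove is Lemma~\ref{lem1}, the bounded distortion of a hyperbolic IFS. The plan is to exploit the hyperbolicity constants $C > 0$ and $\lambda > 1$ together with the Koebe Distortion Theorem to control, uniformly over all finite words $\underline{\ell} \in \Sigma^*$, both the metric distortion of the maps $\phi_{\underline{\ell}}$ along the orbit and the resulting distortion of the Birkhoff sums $S_{m_{\underline{\ell}}}(\psi)$.

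\textbf{Step 1: exponential shrinking of images.} First I would observe that since $(\phi_\ell)_{\ell=1}^{+\infty}$ is hyperbolic with respect to~$f$, for each word $\underline{\ell} \in \Sigma^*$ with $m := m_{\underline{\ell}}$, the intermediate points $f^{m-j}(\phi_{\underline{\ell}}(\xi))$ for $\xi \in B(z_0, \rho/2)$ are well controlled: applying the Koebe Distortion Theorem to the inverse branch of $f^{j}$ carrying $B(z_0,\rho)$ back to a neighborhood of $f^{m-j}(\phi_{\underline{\ell}}(z_0))$, one gets that the set $f^{m-j}(\phi_{\underline{\ell}}(B(z_0,\rho/2)))$ has diameter comparable to $1/|(f^{j})'(f^{m-j}(\phi_{\underline{\ell}}(z_0)))| \le C^{-1}\lambda^{-j}$ up to the Koebe constant~$K$. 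In particular there are constants $A > 0$ and $\lambda > 1$, independent of $\underline{\ell}$ and of the intermediate index, such that for every $j \in \{0, 1, \dots, m_{\underline{\ell}}\}$ and every $\xi, \xi' \in B(z_0,\rho/2)$,
$$
\dist\!\left(f^{j}(\phi_{\underline{\ell}}(\xi)), f^{j}(\phi_{\underline{\ell}}(\xi'))\right) \le A \lambda^{-(m_{\underline{\ell}} - j)}.
$$
(Here I am writing the orbit forward; $f^{j}(\phi_{\underline{\ell}}(z))$ is the image after $j$ steps of the $m_{\underline{\ell}}$-fold composition, so the distances decay geometrically in the number $m_{\underline{\ell}} - j$ of remaining steps.)

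\textbf{Step 2: summing the H\"older and logarithmic contributions.} Writing $S_{m_{\underline{\ell}}}(\psi)(\phi_{\underline{\ell}}(\xi)) = \sum_{j=0}^{m_{\underline{\ell}}-1} \psi(f^{j}(\phi_{\underline{\ell}}(\xi)))$ and subtracting the analogous sum for $\xi'$, I would bound each term $|\psi(f^{j}(\phi_{\underline{\ell}}(\xi))) - \psi(f^{j}(\phi_{\underline{\ell}}(\xi')))|$. Since $\varphi$ is H\"older continuous, say with exponent $\alpha \in (0,1]$ and constant $H$, the $\varphi$-part of $\psi$ contributes at most $H \cdot A^{\alpha} \lambda^{-\alpha(m_{\underline{\ell}} - j)}$. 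For the $-t\ln|f'|$ part one uses that $\ln|f'|$, although it has logarithmic singularities at the critical points and their images, is H\"older continuous (with possibly a smaller exponent) on the relevant region: the key point is that the orbit pieces $f^{j}(\phi_{\underline{\ell}}(\cdot))$ that are close to a critical point can be handled because the Koebe estimate keeps the whole image $f^{j}(\phi_{\underline{\ell}}(B(z_0,\rho/2)))$ in a ball of radius $A\lambda^{-(m_{\underline{\ell}}-j)}$, on which $\ln|f'|$ has oscillation $O(\lambda^{-\alpha'(m_{\underline{\ell}}-j)})$ for a suitable $\alpha' > 0$ — this is exactly the content of \cite[Lemma~3.4.2 \& Lemma~5.2.2]{PrzUrb10}, so I would simply invoke it. Summing the geometric series $\sum_{k \ge 1} (\text{const}) \lambda^{-\alpha' k}$ gives a bound $C_0$ depending only on $f$, $\varphi$, $t$, the constants $C$, $\lambda$, $K$ and $\rho_0$, but not on $\underline{\ell}$, $\xi$ or $\xi'$. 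This completes the proof.

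\textbf{Main obstacle.} The one genuinely delicate point is the $-t\ln|f'|$ term when an intermediate point $f^{j}(\phi_{\underline{\ell}}(\xi))$ is near a critical point of~$f$, where $\ln|f'|$ blows up and naive H\"older continuity fails. The resolution — and the reason the hyperbolicity hypothesis is built into the IFS — is that the geometric shrinking from Step~1 forces the entire image $f^{j}(\phi_{\underline{\ell}}(B(z_0,\rho/2)))$ into an exponentially small ball, so that $\xi$ and $\xi'$ land in the same small neighborhood of that critical point and the \emph{difference} $\ln|f'(f^{j}(\phi_{\underline{\ell}}(\xi)))| - \ln|f'(f^{j}(\phi_{\underline{\ell}}(\xi')))|$ stays controlled (both may be large, but their difference is small because $|f'|$ near a critical point of local degree $d$ behaves like $|z - c|^{d-1}$, whose logarithm is H\"older on a punctured neighborhood once one stays on comparable scales). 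Since this is precisely what the cited distortion lemmas in \cite{PrzUrb10} package, the cleanest route is to reduce to them rather than redo the singular analysis by hand.
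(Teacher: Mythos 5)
Your treatment of the H{\"o}lder part $\varphi$ is correct and is the standard argument: hyperbolicity plus the Koebe Distortion Theorem give $\diam\bigl(f^{j}(\phi_{\underline{\ell}}(B(z_0,\rho/2)))\bigr)\lesssim \lambda^{-(m_{\underline{\ell}}-j)}$, and summing the H{\"o}lder oscillations over $j$ yields a uniform constant (the paper itself simply cites \cite[Lemma~3.4.2 \& Lemma~5.2.2]{PrzUrb10} for exactly this). The genuine gap is in your handling of the $-t\ln|f'|$ term. The assertion that $\ln|f'|$ has oscillation $O(\lambda^{-\alpha'(m_{\underline{\ell}}-j)})$ on the intermediate image is not justified, and it fails precisely in the situation you flag as the main obstacle: if the intermediate image $V_j=f^{j}(\phi_{\underline{\ell}}(B(z_0,\rho/2)))$ lies at distance comparable to its own diameter from a critical point $c$ (which hyperbolicity does not forbid --- the lower bound $|(f^j)'|\ge C\lambda^j$ gives no per-step lower bound on $|f'|$ along the orbit), then the oscillation of $\ln|f'|$ on $V_j$ is only $O(1)$, not exponentially small. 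Your ``comparable scales'' remark is itself unproved (it does hold, but only via a Koebe argument using that $\phi_{\underline{\ell}}$ extends univalently to the full ball $B(z_0,\rho)$, so $\crit(f)$ stays outside the larger pullback), and even granting it you have no control on \emph{how many} intermediate times such comparable-scale approaches occur; an $O(1)$ contribution at linearly many times destroys the uniform bound, so the geometric-series argument does not close for this term.

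The correct (and intended) route avoids any pointwise analysis of $\ln|f'|$: by the chain rule, $S_{m_{\underline{\ell}}}(\ln|f'|)\bigl(\phi_{\underline{\ell}}(\xi)\bigr)=\ln\bigl|(f^{m_{\underline{\ell}}})'(\phi_{\underline{\ell}}(\xi))\bigr|$. Since $\phi_{\underline{\ell}}$ is a univalent inverse branch of $f^{m_{\underline{\ell}}}$ on $B(z_0,\rho)$ with $\rho<\rho_0$, the Koebe Distortion Theorem as stated in the paper applies to $\xi,\xi'\in B(z_0,\rho/2)$ and gives $\bigl|\ln|(f^{m_{\underline{\ell}}})'(\phi_{\underline{\ell}}(\xi))|-\ln|(f^{m_{\underline{\ell}}})'(\phi_{\underline{\ell}}(\xi'))|\bigr|\le \ln K$, uniformly in $\underline{\ell}$. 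Thus the $-t\ln|f'|$ part contributes at most $t\ln K$ to $C_0$, the hyperbolicity hypothesis is only needed for the $\varphi$ part, and the lemma follows by adding the two bounds. You should replace your per-step estimate of $\ln|f'|$ by this telescoping-plus-Koebe step; as written, that portion of the proof does not go through.
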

\subsection{Uniformly expanding sets and topological pressure}
\label{Cantor repellers}
Let $f$ be a rational map of degree at least~$2$, $z_0 \in \CC$, $\rho > 0$ and let $(\phi_\ell)_{\ell = 1}^{+ \infty}$ be an IFS generated by~$f$ defined on $B(z_0,\rho)$ with time sequence~$(m_{\ell})_{\ell = 1}^{+ \infty}$.
Then for each
% $\underline{\ell}\in\Sigma^*$ we define
% $$ K_{\underline{\ell}}\=\phi_{\underline{\ell}}(B(z_0,\rho)) $$
% and for every
$N\geq 1$ we put
$$ \widetilde{\Sigma}_N
\=
\{\underline{\ell}\in \Sigma^*:m_{\underline{\ell}}=N\} $$
 and
     		\begin{equation}\label{eq13}
	     	U_N=\bigcup_{\underline{\ell}\in\widetilde{\Sigma}_N} \phi_{\underline{\ell}}(B(z_0,\rho)).
     		\end{equation}
We also define $F \= f^N : U_N\rightarrow B(z_0,\rho)$, so that for each $\underline{\ell}\in \widetilde{\Sigma}_N$ the map~$F\circ\phi_{\underline{\ell}}$ is the identity on~$B(z_0,\rho)$.
% Thus  $(\phi_{\underline{\ell}})_{\underline{\ell}\in\widetilde{\Sigma}_N}$ is an IFS generated by $F$.
Note that the set 	
$$ \widetilde{K}_N
\=
\bigcap_{j=1}^{+ \infty} \bigcup_{\underline{\ell}^1,\cdots, \underline{\ell}^j\in\widetilde{\Sigma}_N} 
  \phi_{\underline{\ell}^1}\circ\cdots \circ\phi_{\underline{\ell}^j}(B(z_0,\rho)),$$
is a Cantor set contained in~$\julia$ and that $F|_{\widetilde{K}_N}$ is uniformly expanding.
When $(\phi_\ell)_{\ell = 1}^{+ \infty}$ is free, then for each pair of distinct words~$\underline{\ell}$ and~$\underline{\ell}'$ in~$\widetilde{\Sigma}_N$ the maps~$\phi_{\underline{\ell}}$ and~$\phi_{\underline{\ell}'}$ are distinct inverse branches of~$F$ defined on~$B(z_0, \rho)$.
In particular the sets~$\phi_{\underline{\ell}}(B(z_0, \rho))$ and~$\phi_{\underline{\ell}'}(B(z_0, \rho))$ are disjoint.
It follows that for each continuous potential $\varphi:\widetilde{K}_N\rightarrow\R$ the topological pressure of~$F|_{\widetilde{K}_N}$ for the potential $S_N(\varphi)$ is given for each $\zeta\in B(z_0,\rho)$ by
             	\begin{multline*}
	     	      P\left(F, S_N(\varphi)\right)
\\=
\limsup_{n\rightarrow + \infty}\frac{1}{n}\ln\sum_{\underline{\ell}^1,\cdots,
		          \underline{\ell}^n\in\widetilde{\Sigma}_{N}}\exp\left( S_{nN}(\varphi)\left(\phi_{\underline{\ell}^{1}}
		          \circ\cdots\circ\phi_{\underline{\ell}^{n}}(\zeta)\right)\right).
		         \end{multline*}
           \begin{lem}\label{lem2}
Let~$f$ be a rational map of degree at least~$2$, $\varphi : \julia \to \R$ a H{\"o}lder continuous potential and~$t \ge 0$.
Furthermore, let $z_0 \in \CC$, $\rho > 0$ and let $(\phi_{\ell})_{\ell = 1}^{+ \infty}$ be a free IFS generated by~$f$ defined on $B(z_0,\rho)$, which is hyperbolic with respect to~$f$.
Fix~$\zeta$ in $B(z_0,\rho/2)$ and for each integer~$N \ge 1$ let $\widetilde{\Sigma}_N$ and $\widetilde{K}_N$ be as above and put
$$ \widetilde{\Lambda}_N
\=
\sum_{\underline{\ell}\in\widetilde{\Sigma}_N}\exp\Big(S_N(\varphi)
		    \left(\phi_{\underline{\ell}}(\zeta)\right)\Big)\left|\left(f^N\right)'\left(\phi_{\underline{\ell}}(\zeta)\right)\right|^{-t}.$$
		    Then we have
$$P\left(f,\varphi-t\ln|f'|\right)\geq\limsup_{N\rightarrow + \infty}\frac{1}{N}\ln\widetilde{\Lambda}_N.$$
            \end{lem}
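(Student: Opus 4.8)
The plan is to bound the pressure $P(f,\varphi - t\ln|f'|)$ from below by the topological pressure of the uniformly expanding system $F|_{\widetilde{K}_N}$ studied in this subsection, and then to compare that topological pressure with $\frac{1}{N}\ln\widetilde{\Lambda}_N$. For the first step, recall that $\widetilde{K}_N \subset \julia$ is $f$-invariant (it is $F = f^N$-invariant, hence $f$-invariant), so any $F|_{\widetilde{K}_N}$-invariant probability measure gives rise, by spreading it along the orbit under $f$, to an $f$-invariant measure on $\julia$; combined with the fact that $P(f^N, S_N(\psi)) = N\,P(f,\psi)$ for $\psi = \varphi - t\ln|f'|$ and the variational principle, this yields
$$
P(f, \varphi - t\ln|f'|) \;=\; \tfrac{1}{N}\,P\!\left(f^N, S_N(\psi)\right) \;\ge\; \tfrac{1}{N}\,P\!\left(F|_{\widetilde{K}_N},\, S_N(\psi)|_{\widetilde{K}_N}\right)
$$
for every $N \ge 1$.

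The second step is to evaluate $P(F|_{\widetilde{K}_N}, S_N(\psi))$ using the formula displayed just before the lemma. Because the IFS is free, distinct words in $\widetilde{\Sigma}_N$ give disjoint images $\phi_{\underline{\ell}}(B(z_0,\rho))$, and the inverse branches of $F^n$ defined on $B(z_0,\rho)$ are exactly the compositions $\phi_{\underline{\ell}^1}\circ\cdots\circ\phi_{\underline{\ell}^n}$ with each $\underline{\ell}^i \in \widetilde{\Sigma}_N$. Evaluating $S_{nN}(\psi) = S_{nN}(\varphi) - t\ln|(f^{nN})'|$ at the point $\phi_{\underline{\ell}^1}\circ\cdots\circ\phi_{\underline{\ell}^n}(\zeta)$, I would use the chain rule $\left|(f^{nN})'(\phi_{\underline{\ell}^1}\circ\cdots\circ\phi_{\underline{\ell}^n}(\zeta))\right|^{-t} = \prod_{i=1}^n \left|(f^N)'\bigl(\phi_{\underline{\ell}^i}(\phi_{\underline{\ell}^{i+1}}\circ\cdots\circ\phi_{\underline{\ell}^n}(\zeta))\bigr)\right|^{-t}$, together with Lemma~\ref{lem1} (bounded distortion of a hyperbolic IFS, with the constant $C_0$) to replace each factor $\exp(S_N(\varphi)(\cdot))\left|(f^N)'(\cdot)\right|^{-t}$, evaluated at $\phi_{\underline{\ell}^i}$ of various points of $B(z_0,\rho/2)$, by the corresponding factor evaluated at $\phi_{\underline{\ell}^i}(\zeta)$, at the cost of a multiplicative error $e^{\pm C_0}$ per factor. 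Summing over all words, one gets
$$
\sum_{\underline{\ell}^1,\ldots,\underline{\ell}^n \in \widetilde{\Sigma}_N} \exp\!\left(S_{nN}(\psi)\bigl(\phi_{\underline{\ell}^1}\circ\cdots\circ\phi_{\underline{\ell}^n}(\zeta)\bigr)\right) \;\ge\; e^{-nC_0}\,\bigl(\widetilde{\Lambda}_N\bigr)^n,
$$
so that taking $\frac{1}{n}\ln$ and letting $n \to +\infty$ gives $P(F|_{\widetilde{K}_N}, S_N(\psi)) \ge \ln\widetilde{\Lambda}_N - C_0$. Dividing by $N$ and taking $\limsup_{N\to+\infty}$ kills the $C_0/N$ term, yielding the claimed inequality.

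The main obstacle — and the place requiring the most care — is the distortion bookkeeping in the second step: one must check that Lemma~\ref{lem1} genuinely applies to each of the intermediate points $\phi_{\underline{\ell}^{i+1}}\circ\cdots\circ\phi_{\underline{\ell}^n}(\zeta)$, i.e.\ that these lie in $B(z_0,\rho/2)$ (which holds because every $\phi_\ell$ maps $B(z_0,\rho)$ into $B(z_0,\rho/2)$, so any nonempty composition lands there, and the base point $\zeta$ was chosen in $B(z_0,\rho/2)$), and that the per-factor error is uniformly $C_0$, independent of $\underline{\ell}^i$ and of the location within the word. A secondary, more bureaucratic point is justifying the passage from an $F|_{\widetilde{K}_N}$-invariant measure to an $f$-invariant measure realizing (up to the factor $N$) the corresponding value of $h_\mu + \int\psi\,d\mu$ — this is the standard identity $h_{\nu}(f^N) = N\,h_\nu(f)$ together with $\int S_N(\psi)\,d\nu = N\int\psi\,d\nu$ for $f$-invariant $\nu$, applied to the natural $f$-invariant extension of an ergodic $F$-invariant measure; but this is routine and already implicitly used elsewhere in the paper.
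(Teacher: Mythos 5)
Your proposal is correct and takes essentially the same route as the paper's proof: bound $P(f,\varphi-t\ln|f'|)$ below by $\tfrac1N P\bigl(f^N|_{\widetilde{K}_N},S_N(\psi)\bigr)$, evaluate that pressure by the preimage-sum formula over compositions of the branches $\phi_{\underline{\ell}}$, apply Lemma~\ref{lem1} blockwise to get the factorized lower bound $e^{-nC_0}\widetilde{\Lambda}_N^{\,n}$, and let $N\to+\infty$ to absorb the $C_0/N$ term. The only (immaterial) slip is the parenthetical claim that $\widetilde{K}_N$, being $f^N$-invariant, is $f$-invariant; it need not be, but your averaging of an $F$-invariant measure along the $f$-orbit makes this irrelevant, exactly as implicitly used in the paper.
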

\begin{proof}
Put $\psi=\varphi-t\ln|f'|$.
If we let $C_0 > 0$ be the constant given by Lemma~\ref{lem1}, then for each integer~$N \ge 1$ we have
\begin{multline*}
P(f, \psi)
=
\frac{1}{N} P(f^N, S_N(\psi))
\ge
\frac{1}{N}P\left(f^{N}|_{\widetilde{K}_N},S_N(\psi)\right)
\\ =
\frac{1}{N}\limsup_{n\rightarrow + \infty}\frac{1}{n}\ln\sum_{\underline{\ell}^1,\cdots,\underline{\ell}^n\in \widetilde{\Sigma}_N}
\exp \left[
S_{N}(\psi)\left(\phi_{\underline{\ell}^1}\circ\cdots\circ \phi_{\underline{\ell}^n} (\zeta) \right)
\right. \\ \left.
         +S_N(\psi)\left(\phi_{\underline{\ell}^2}\circ\cdots \circ\phi_{\underline{\ell}^n}(\zeta)\right)+\cdots+ S_N(\psi)
	        \left(\phi_{\underline{\ell}^n}(\zeta)\right)\right]
\\ \geq
\frac{1}{N} \limsup_{n\rightarrow + \infty} \frac{1}{n} \ln
\sum_{\underline{\ell}\in \widetilde{\Sigma}_N}\exp \left[ n \left( S_N(\psi)(\phi_{\underline{\ell}}(\zeta)) - C_0 \right) \right]
\\ =
\frac{-C_0}{N}+\frac{1}{N}\ln\widetilde{\Lambda}_N.
		\end{multline*}
We obtain the desired inequality by taking~$N \to + \infty$.
		      \end{proof}

\subsection{Proof of the~\hyperref[key lemma]{Key~Lemma} assuming Proposition~\ref{p:construction of IFS}}
\label{proof of key lemma}
Let~$\mu$ be an ergodic invariant probability measure on~$\julia$ whose Lyapunov exponent is strictly positive, $\varphi : \julia \to \R$ a H{\"o}lder continuous potential and $t \ge 0$ and put~$\psi \= \varphi - \ln |f'|$.
In the case where~$\mu$ is supported on a periodic orbit of~$f$, the desired inequality is given by Proposition~\ref{p:periodic case}.
Therefore from now on we will suppose that the measure~$\mu$ is not supported on a periodic orbit.
In particular, $\mu$ does not charge an exceptional point of~$f$ and hence it satisfies the hypothesis of Proposition~\ref{p:construction of IFS}.
Let $C > 0$, $z_0\in\julia$, $\rho>0$ and let $(\phi_\ell)_{\ell = 1}^{+ \infty}$ be the free IFS generated by~$f$ defined on $B(z_0,\rho)$, which is hyperbolic with respect to~$f$, given by Proposition~\ref{p:construction of IFS}.
Choose a non\nobreakdash-periodic point $\zeta\in~B(z_0,\rho/2)$ and for each integer~$N \ge 1$ let~$\widetilde{\Lambda}_N$ be as in Lemma~\ref{lem2}.
Then this lemma implies that the radius of convergence~$R$ of the power series in the variable~$s$ defined by,
$$\Xi(s)
\=
\sum_{N=1}^{+ \infty}\widetilde{\Lambda}_N s^N 
=
\sum_{\underline{\ell} \in \Sigma^*} \left( \exp\Big(S_{m_{\underline{\ell}}}(\psi) \left(\phi_{\underline{\ell}}(\zeta)\right)\Big) \right) s^{m_{\underline{\ell}}}$$
satisfies
$$ R
\=
\left( \limsup_{N\rightarrow + \infty}\widetilde{\Lambda}_N^{1/N} \right)^{-1}
\ge
\exp\left(-P(f,\psi)\right). $$
Therefore, to prove the~\hyperref[key lemma]{Key~Lemma} it is enough to prove that~$R$ is strictly less than $\exp(- \int \psi d\mu)$.

To prove this, let $C_0>0$ be given by Lemma~\ref{lem1} and let~$(m_{\ell})_{\ell = 1}^{+ \infty}$ be the time sequence of~$(\phi_{\ell})_{\ell = 1}^{+ \infty}$.
          By~\eqref{e:key estimate} and by Lemma~\ref{lem1}, for each integer $\ell\geq 1$ we have
          \begin{displaymath}
\exp(S_{m_\ell}(\psi)(\phi_{\ell}(\zeta)))
\ge
\exp(- (C_0 + C)) \exp\left(m_\ell\int \psi d\mu\right).
          \end{displaymath}   
                  % For each sequence $\underline{\ell}\in \Sigma^*$ put $w_{\underline{\ell}} \= \phi_{\underline{\ell}}(\zeta)\in f^{-m_{\underline{\ell}}}(\zeta)$.
So for each~$\underline{\ell} \in \Sigma^*$ we have,
 \begin{equation}\label{eq7}
                  \exp\left(S_{m_{\underline{\ell}}}(\psi)(\phi_{\underline{\ell}}(\zeta))\right)
% \geq               
%                  \exp(-C_0)\exp\left(S_{m_{\underline{\ell}}}(\psi)(\phi_{\underline{\ell}}(z_0))\right)\\
\geq 
\exp(- \vert\underline{\ell}\vert (C_0 + C)) \exp\left(m_{\underline{\ell}}\int \psi d\mu\right).                              
\end{equation}
Therefore, if we define the power series~$\Phi$ in the variable~$s$ by,
\begin{displaymath}
\Phi(s)
\=
\sum_{\ell = 1}^{+ \infty} \exp\left(- C_0 - C + m_{\ell}\int\psi d\mu\right) s^{m_\ell},
\end{displaymath}                 
                then by~\eqref{eq7} each of the coefficients of the power series in the variable~$s$
                 \begin{equation}
                 \Phi(s)+\Phi(s)^2+\Phi(s)^3+\cdots \\
                 \end{equation}   
                 is less than or equal to the corresponding coefficient of the series~$\Xi$. 
                But the radius of convergence of~$\Phi$ is equal to $\exp\left(\displaystyle - \int\psi d\mu\right)$ and we thus have
                $$\lim_{s\rightarrow \exp\left(- \int\psi d\mu\right)^{-}}\Phi(s)=+\infty.$$
So there is $s_0 \in \left(0,\exp\left(- \int \psi d\mu\right)\right)$ such that $\Phi(s_0)\geq 1$ and therefore the radius of convergence~$R$ of~$\Xi$ is less than or equal to~$s_0$.
Hence we have
$$ \exp\left(-P(f,\psi)\right) \le R \le s_0 < \exp \left( - \int \psi d\mu \right) $$
and $P(f, \psi) > \int \psi d\mu$, as wanted.
% This completes the proof of the~\hyperref[key lemma]{Key Lemma}.

\section{Constructing the IFS}
\label{s:construction of IFS}
In this section we prove Proposition~\ref{p:construction of IFS}, and thus complete the proof of the \hyperref[key lemma]{Key Lemma} and the \hyperref[main theorem]{Main Theorem}.

We will use the following lemma.
\begin{lem}\label{l:freeness}
Let~$f$ be a rational map of degree at least~$2$ and let~$(z_n)_{n = 0}^{+ \infty}$ be a sequence in~$\julia$ such that for each integer~$n \ge 0$ we have~$f(z_{n + 1}) = z_{n}$.
Let~$\rho > 0$, $M \ge 1$ an integer and $(n_{\ell})_{\ell = 1}^{+ \infty}$ a sequence of strictly positive integers such that for each integer~$\ell \ge 1$ we have~$n_{\ell + 1} \ge n_{\ell} + M$ and the following properties hold.
There is a point~$x_{\ell}$ of~$B(z_0, \rho/2)$ in~$f^{-M}(z_{n_{\ell}})$ different from~$z_{n_{\ell} + M}$ and an inverse branch~$\phi_\ell$ of~$f^{n_{\ell} + M}$ defined on~$B(z_0, \rho)$ and taking images in~$B(z_0, \rho/2)$, such that~$\phi_{\ell}(z_0) = x_{\ell}$.
Then the IFS~$(\phi_{\ell})_{\ell = 1}^{+ \infty}$ so defined is free.
\end{lem}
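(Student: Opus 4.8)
The plan is to argue by contradiction: suppose there are two distinct finite words $\underline{\ell} = \ell_1 \cdots \ell_j$ and $\underline{\ell}' = \ell'_1 \cdots \ell'_k$ in $\Sigma^*$ with $\phi_{\underline{\ell}} = \phi_{\underline{\ell}'}$. The first observation is that the composition $\phi_{\underline{\ell}}$ is an inverse branch of $f^{m_{\underline{\ell}}}$, where $m_{\underline{\ell}} = (n_{\ell_1} + M) + \cdots + (n_{\ell_j} + M)$, since each $\phi_{\ell_i}$ is an inverse branch of $f^{n_{\ell_i} + M}$. Hence if $\phi_{\underline{\ell}} = \phi_{\underline{\ell}'}$, then evaluating $f^{m_{\underline{\ell}}}$ and $f^{m_{\underline{\ell}'}}$ on the common image forces $m_{\underline{\ell}} = m_{\underline{\ell}'}$ (otherwise the two branches would invert $f$ to different powers on an open set, which is impossible since a nonconstant rational map is not eventually periodic in this trivial sense — more precisely, $f^a = f^b \circ (\text{something})$ composed with the branch would collapse). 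So I would first establish $m_{\underline{\ell}} = m_{\underline{\ell}'}$, call this common value $N$.

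Next I would look at the point $\phi_{\underline{\ell}}(z_0) = \phi_{\underline{\ell}'}(z_0) =: x$. The key structural feature is that the composition $\phi_{\ell_1 \cdots \ell_j}$ applied to $z_0$ traces back along the backward orbit of $z_0$ through $f$; but more usefully, I track the \emph{forward} images of $x$. Writing $x = \phi_{\underline{\ell}}(z_0)$, the point $f^{N - (n_{\ell_1} + M)}(x) = \phi_{\ell_1}(z_0) = x_{\ell_1}$, which by hypothesis lies in $f^{-M}(z_{n_{\ell_1}})$ and is different from $z_{n_{\ell_1}+M}$. The decisive point is that among the intermediate forward iterates $f^m(x)$, $0 \le m \le N$, the spacing condition $n_{\ell+1} \ge n_\ell + M$ guarantees that the indices $n_{\ell_i}$ appearing are recoverable: the forward orbit of $x$ under $f$ passes through the points $z_{n_{\ell_1}}, z_{n_{\ell_2}}, \ldots$ (after the appropriate number of steps, via the $x_{\ell_i}$), and since these lie on the fixed backward orbit $(z_n)$ with strictly increasing and $M$-separated indices, the word $\underline{\ell}$ is determined by $x$ together with $N$. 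Concretely: $f^{M}(x_{\ell_1}) = z_{n_{\ell_1}}$, and then $\phi_{\ell_2 \cdots \ell_j}(z_0)$ must equal the point obtained from $x_{\ell_1}$ by removing the first block, which again sits over $z_{n_{\ell_2}}$ after $M$ more steps, and so on. I would formalize this as an induction on $\min(j,k)$: comparing the last blocks $\ell_j$ and $\ell'_k$ first (they invert the innermost part of $f^{n_{\cdot} + M}$ near $z_0$), one shows $n_{\ell_j} + M = n_{\ell'_k} + M$ hence $\ell_j = \ell'_k$, then cancels $\phi_{\ell_j} = \phi_{\ell'_k}$ (injective on $B(z_0,\rho)$) to reduce the word length.

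The step I expect to be the main obstacle is the cancellation/injectivity bookkeeping: each $\phi_{\ell_i}$ is injective on $B(z_0,\rho)$ and maps into $B(z_0,\rho/2)$, so compositions are well-defined and injective, which lets me peel off matched blocks from the \emph{right} end. But I must be careful that matching $n_{\ell_j} = n_{\ell'_k}$ genuinely forces $\ell_j = \ell'_k$ — this is where the hypothesis that $\phi_\ell$ is the \emph{unique} inverse branch of $f^{n_\ell + M}$ on $B(z_0,\rho)$ sending $z_0$ to $x_\ell$, combined with the indices $(n_\ell)$ being strictly increasing (hence $\ell \mapsto n_\ell$ injective), does the work: two branches of the same power $f^{n+M}$ that agree at one point of the simply connected domain $B(z_0,\rho)$ agree everywhere, and distinct $\ell$ give distinct $n_\ell$, hence a priori distinct powers, except possibly when $n_{\ell_j}+M$ happens to coincide across the two words — in which case the shared value of $f^{n_{\ell_j}+M - M}(x) = f^{n_{\ell_j}}(x_{\ell_j})$ lands on $z_{n_{\ell_j}}$ in the fixed sequence, pinning down $n_{\ell_j}$ uniquely and therefore $\ell_j$. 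Once the induction reduces one word to the empty word while the other is nonempty, we get $\phi_{\underline{\ell}''} = \mathrm{id}$ for a nonempty word, forcing $z_0$ to be a periodic point whose backward branch is the identity — impossible since $m_{\underline{\ell}''} \ge M \ge 1$ and $f$ has degree at least $2$, so no nontrivial inverse branch of a positive power of $f$ can be the identity on an open set. This contradiction completes the proof.
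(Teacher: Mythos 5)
There is a genuine gap at the precise point that carries the whole content of the lemma: the case in which, after cancelling matching innermost maps, the two words end in \emph{different} letters. You assert that ``one shows $n_{\ell_j}+M=n_{\ell'_k}+M$ hence $\ell_j=\ell'_k$'', but you never prove it; the justification you sketch only discusses the situation where these innermost times ``happen to coincide'', which is vacuous (since $\ell\mapsto n_\ell$ is strictly increasing, coinciding times already mean equal letters), and the displayed identity you offer there, $f^{\,n_{\ell_j}+M-M}(x)=f^{\,n_{\ell_j}}(x_{\ell_j})$ landing on $z_{n_{\ell_j}}$, does not parse (it is $f^M(x_{\ell_j})=z_{n_{\ell_j}}$, not $f^{\,n_{\ell_j}}$). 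Crucially, nowhere in your deductive chain do you use the hypothesis $x_\ell\neq z_{n_\ell+M}$, and without it the lemma is false: take $z_0$ a repelling fixed point in $J(f)$, $z_n\equiv z_0$, $\phi_\ell$ the $(n_\ell+M)$-fold iterate of the local inverse of $f$ at $z_0$ (so $x_\ell=z_0=z_{n_\ell+M}$), and $n_\ell+M=\ell(n_1+M)$; then the spacing hypothesis holds, the $\phi_\ell$ are pairwise distinct, but $\phi_1\circ\cdots\circ\phi_1$ ($\ell$ times) equals $\phi_\ell$, so the IFS is not free. Any correct proof must therefore invoke that hypothesis, and your ``the word is determined by $x$ together with $N$'' is exactly the statement to be proved, not an available fact. (A further sign of confusion: with the paper's convention $\phi_{\ell_1\cdots\ell_j}=\phi_{\ell_1}\circ\cdots\circ\phi_{\ell_j}$, your claim $f^{N-(n_{\ell_1}+M)}(x)=x_{\ell_1}$ is wrong; it is the \emph{last} letter that is isolated this way, $f^{N-(n_{\ell_j}+M)}(x)=\phi_{\ell_j}(z_0)=x_{\ell_j}$, which is inconsistent with your later, correct, identification of $\ell_j$ as innermost.)

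Here is the missing step, which is the paper's argument. Assume (as you may, after your cancellation and time-equality reductions, which are fine) that the last letters differ, say $\ell'_{k'}\ge \ell_k+1$, and put $N=m_{\underline{\ell}}=m_{\underline{\ell}'}$. Apply $f^{N-n_{\ell_k}-M}$ to the two points $\phi_{\underline{\ell}}(z_0)$ and $\phi_{\underline{\ell}'}(z_0)$. For the first word this peels off everything but the innermost map and gives $x_{\ell_k}$. For the second word, the spacing hypothesis gives $n_{\ell'_{k'}}-n_{\ell_k}\ge n_{\ell_k+1}-n_{\ell_k}\ge M$, so $N-n_{\ell_k}-M\ge m_{\ell'_1\cdots\ell'_{k'-1}}+M$; hence by that time the orbit has already passed through $x_{\ell'_{k'}}$ and $z_{n_{\ell'_{k'}}}$ and sits on the backward orbit, at $z_{N-(N-n_{\ell_k}-M)}=z_{n_{\ell_k}+M}$. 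Since $x_{\ell_k}\neq z_{n_{\ell_k}+M}$ by hypothesis, the two images of $z_0$, hence the two maps, differ. This is where both the $M$-separation of the $n_\ell$ and the hypothesis $x_\ell\neq z_{n_\ell+M}$ do their work; your preliminary observations (equal maps force equal total times; a nonempty word cannot compose to the identity; cancellation of equal innermost letters via the identity theorem) are correct and can be kept, but they do not substitute for this comparison.
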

\begin{proof}
Note first that the hypotheses imply that for each integer~$n \ge 1$ there is an inverse branch~$\tphi_n$ of~$f^n$ defined on~$B(z_0, \rho)$ and such that~$\tphi_n(z_0) = z_n$.

Let $k, k' \ge 1$ be integers and let $\underline{\ell} \= \ell_1 \cdots \ell_k$ and~$\underline{\ell}' \= \ell_{1}^{'} \cdots \ell_{k'}^{'}$ be different words in~$\Sigma^*$.
To prove that the maps~$\phi_{\underline{\ell}}$ and~$\phi_{\underline{\ell}'}$ are different, we assume without loss of generality that~$\ell_{k'}^{'} \ge \ell_k + 1$.
Putting~$N \= n_{\ell_1} + \cdots + n_{\ell_{k}} + kM$, we have
$$f^{N - n_{\ell_k} - M} \circ \phi_{\underline{\ell}}(z_0)
=
\phi_{\ell_k}(z_0)
=
x_{\ell_k}.$$
On the other hand, the hypothesis that for every $\ell\geq 1$ we have $n_{\ell+1}-n_{\ell}\geq M$ implies
$$ n_{\ell_{k'}^{'}} - n_{\ell_k}
\ge
n_{\ell_k + 1} - n_{\ell_k}
\ge
M $$
and hence
\begin{equation*}
f^{N-n_{\ell_k}-M}\circ \phi_{\underline{\ell}'}(z_0)
=
f^{n_{\ell_{k'}'} - n_{\ell_k}} \circ \phi_{\ell_{k'}'}(z_0)
=
\tphi_{n_{\ell_k} + M}(z_0)
=
z_{n_{\ell_k} + M}.
\end{equation*}
As by hypothesis the points~$x_{\ell_k}$ and~$z_{n_{\ell_k} + M}$ are different, it follows that the points~$\phi_{\underline{\ell}'}(z_0)$ and~$\phi_{\underline{\ell}}(z_0)$, and hence the maps~$\phi_{\underline{\ell}}$ and~$\phi_{\underline{\ell}'}$, are different.
\end{proof}

\begin{proof}[Proof of Proposition~\ref{p:construction of IFS}]
We denote by~$Z$ the space of all sequences $(z_n)_{n = 0}^{+ \infty}$ in~$\julia$ such that for each integer~$n \ge 0$ we have~$f(z_{n + 1})=z_{n}$.
We define the inverse limit of~$f : \julia \to \julia$ as the invertible map~$F : Z \to Z$ that maps a point~$(z_n)_{n = 0}^{+ \infty}$ in~$Z$ to the sequence~$(z_n')_{n = 0}^{+ \infty}$ defined by~$z_0' = f(z_0)$ and for each integer~$n \ge 1$ by~$z_n' = z_{n - 1}$.
We denote by $\Pi : Z \rightarrow \julia $ the projection given by $\Pi \left( (z_n)_{n \in \Z} \right) = z_0$, so that $f \circ \Pi = \Pi\circ F$.

Let~$\nu$ be the unique measure on~$Z$ that is invariant by~$F$ and so that $\Pi_*\nu=~\mu$.
Since~$\mu$ is ergodic, the measure~$\nu$ is also ergodic for~$F$ and $F^{-1}$.
So by~\cite[Theorem~$10.2.3$]{PrzUrb10} there is a subset of~$Z$ of full measure with respect to~$\nu$, such that for each point~$\underline{z} = (z_n)_{n = 0}^{+ \infty}$ in this set there is $r(\underline{z})>0$ such that for each $n\geq 1$ there is an inverse branch~$\tphi_n$ of~$f^n$ defined on~$B(z_0,r(\underline{z})) = B(\Pi(\underline{z}),r(\underline{z}))$ and such that~$\tphi_n(z_0) = z_{n}=\Pi(F^{-n}(\underline{z}))$.
Fix such $\underline{z}=(z_n)_{n = 0}^{+\infty}$, such that in addition it is generic for~$\nu$ and~$F^{-1}$ in the sense of the Birkhoff Ergodic Theorem, so that we have
$$ \lim_{n \to + \infty} \frac{1}{n} \sum_{j = 0}^{n - 1} \delta_{z_j} = \mu $$
in the weak* topology.
We also assume that it satisfies the conclusion of Birkhoff Ergodic Theorem for~$F^{-1}$ and for the function $\ln\vert f'\vert\circ\Pi$:
\begin{equation}\label{eq:Birko}
\lim_{n\rightarrow + \infty} \frac{1}{n}\sum_{j=0}^{n-1}\ln \vert f'\vert\circ \Pi(F^{-j}(\underline{z}))
=
\chi_\mu(f) ;
\end{equation}
and that
\begin{equation*}
\limsup_{m\rightarrow + \infty}\sum_{j=0}^{m-1}\Big(\psi\circ\Pi(F^{-j}((z_n)_{n=0}^{+ \infty}))-\int\psi d\mu\Big)\geq 0,
\end{equation*}
see for example~\cite[Lemma~$8.3$]{PrzRiv0806v2}.
Reducing~$r(\underline{z})$ if necessary, we assume that~$r(\underline{z}) < \rho_0$.
So by Koebe Distortion Theorem there are constants~$C_0 > 0$ and $\lambda_0 > 1$ such that for every~$\zeta \in B(z_0, r(\underline{z})/2)$ and every integer~$n \ge 1$ we have
\begin{equation}
\label{e:hyperbolic time}
|(f^n)'(\tphi_n(\zeta))|
\ge
C_0 \lambda_0^n.
\end{equation}
In particular, the diameter of~$\tphi_n (B(z_0, r(\underline{z})/2))$ converges to~$0$ as~$n \to +\infty$.
On the other hand there is a strictly increasing sequence of positive integers~$(n_\ell)_{\ell = 1}^{+ \infty}$ such that for all~$\ell$ we have 
$$ S_{n_{\ell}}(\psi)(z_{n_{\ell}})
\ge
n_{\ell}\int\psi d\mu - 1 $$
and such that the sequence $((z_{n_{\ell} + j})_{j = 0}^{+ \infty})_{\ell = 1}^{+ \infty}$ converges in~$Z$ to some point~$(y_n)_{n = 0}^{+ \infty}$.

Now the proof is divided into two cases, according to whether the limit point~$(y_n)_{n = 0}^{+ \infty}$ is a periodic orbit of~$F$ of exceptional points of~$f$ or not.

\partn{Case~$1$} $(y_n)_{n = 0}^{+ \infty}$ is not a periodic orbit of~$F$ of exceptional points of~$f$.
We start defining an integer~$N \ge 0$ as follows.
If~$y_0$ is a non-exceptional point we put~$N = 0$.
If~$y_0$ is an exceptional point, then our hypothesis implies that~$(y_n)_{n = 0}^{+ \infty}$ is not a periodic orbit; so there is an integer~$N \ge 1$ such that~$y_N$ is not exceptional.
In all the cases~$y_N$ is a non-exceptional point.
So there an integer~$M' \ge 1$ and an unramified preimage~$w_0'$ of~$y_N$ by~$f^{M'}$ that is not in the forward orbit of a critical point of~$f$.
By topological exactness of~$f$ on~$\julia$ there is an integer~$M \ge M' + 1$ such that there are different points~$w_0$ and~$w_1$ of~$B(z_0, r(\underline{z})/4)$ in~$f^{-(M - M')}(w_0')$.
It thus follows that
$$ f^M(w_0) = f^M(w_1) = y_N $$
and that~$f^M$ is locally injective at both, $z = w_0$ and~$z = w_1$.
Then there is~$\hrho > 0$ and an inverse branch~$\Psi_0$ (resp.~$\Psi_1$) of~$f^M$ defined on~$B(y_N, \hrho)$, taking images in~$B(z_0, r(\underline{z})/4)$ and such that~$\Psi_0(y_N) = w_0$ (resp.~$\Psi_1(y_N) = w_1$).
Replacing~$(n_{\ell})_{\ell = 1}^{+ \infty}$ by a subsequence if necessary we assume that for every integer~$\ell \ge 1$ we have
\begin{equation}
\label{e:early expansion}
\left( \sup_{B(y_N, \hrho/2)} |\Psi_0'| \right)^{-1} C_0 \lambda_0^{n_\ell + N}
\ge
\lambda_0^{(n_\ell + N + M)/2},
\end{equation}
$n_{\ell + 1} \ge n_{\ell} + M$ and
$$ \tphi_{n_{\ell} + N}(B(z_0, r(\underline{z})/2))
\subset
B(y_N, \hrho/2); $$
in particular both compositions, $\Psi_0 \circ \tphi_{n_{\ell} + N}$ and~$\Psi_1 \circ \tphi_{n_{\ell} + N}$ are defined on~$B(z_0, r(\underline{z})/2)$ and take images in~$B(z_0, r(\underline{z})/4)$.
By construction, for each integer~$\ell \ge 1$ the points~$\Psi_0 \circ \tphi_{n_{\ell} + N}(z_0)$ and~$\Psi_1 \circ \tphi_{n_{\ell} + N}(z_0)$ are different.
Replacing~$(n_{\ell})_{\ell = 1}^{+ \infty}$ by a subsequence and interchanging~$w_0$ and~$w_1$ if necessary, we assume that for each integer~$\ell$ the point
$$ x_\ell
\=
\Psi_0 \circ \tphi_{n_{\ell} + N}(z_0) = \Psi_0(z_{n_{\ell} + N}) \in f^{-M}(z_{n_\ell + N}) $$
is different from~$z_{n_{\ell} + N + M}$ and put
$$ \phi_{\ell} \= \Psi_0 \circ \tphi_{n_{\ell} + N}|_{B(z_0, \rho)}. $$
Note that~$(\phi_\ell)_{\ell = 1}^{+ \infty}$ is an IFS defined on~$B(z_0, r(\underline{z}))$ that is generated by~$f$ and whose time sequence is~$(m_{\ell})_{\ell = 1}^{+ \infty} \= (n_{\ell} + N + M)_{\ell = 1}^{+ \infty}$.

Lemma~\ref{l:freeness} with~$\rho = r(\underline{z})/2$ and with $(n_{\ell})_{\ell = 1}^{+ \infty}$ replaced by~$(n_{\ell} + N)_{\ell = 1}^{+ \infty}$, implies that the IFS~$(\phi_\ell)_{\ell = 1}^{+ \infty}$ is free.
To prove that~$(\phi_{\ell})_{\ell = 1}^{+ \infty}$ is hyperbolic with respect to~$f$, note that~\eqref{e:early expansion} implies that for each integer~$\ell \ge 1$ and each~$\zeta \in B(z_0, \rho)$
$$ |(f^{m_\ell})'(\phi_{\ell}(\zeta))|
\ge
\lambda_0^{m_\ell/2}. $$
Together with~\eqref{e:hyperbolic time} this implies that the IFS~$(\phi_\ell)_{\ell = 1}^{+ \infty}$ is hyperbolic with respect to~$f$; we omit the standard details.
It remains to prove~\eqref{e:key estimate}.
To do this, put
$$ C_1
\=
- \inf_{\julia} \psi
=
- \inf_{\julia} \varphi + t \sup_{\julia} \ln |f'|
<
+ \infty, $$
so that
\begin{multline*}
S_{m_\ell}(\psi)(\phi_{\ell}(z_0))
=
S_{N + M}(\psi)(\phi_\ell(z_0)) + S_{n_{\ell}}(\psi)(\tphi_{n_{\ell}}(z_0))
\\ \ge
 - (N + M) C_1 + n_{\ell} \int \psi d\mu - 1.
\\ =
m_{\ell} \int \psi d\mu - 1 - (N + M)\left( C_1 + \int \psi d\mu \right).
\end{multline*}
This proves~\eqref{e:key estimate} with~$C = 1 + (N + M) \left( C_1 + \int \psi d\mu \right)$ and completes the proof of the proposition in this case.

\partn{Case~$2$}
$(y_n)_{n = 0}^{+ \infty}$ is a periodic orbit of~$F$ of exceptional points of~$f$.
Since the point~$\underline{z}$ is generic for~$\nu$ and~$F^{-1}$ and since by hypothesis~$\mu$ is not supported on an exceptional periodic orbit, it follows that~$\underline{z}$ is not a periodic orbit of~$F$ of exceptional points of~$f$; in particular it is different from~$(y_n)_{n = 1}^{\infty}$.
Together with the fact that for each integer~$n \ge 1$ there is an inverse branch of~$f^n$ defined on~$B(z_0, r(\underline{z}))$ and mapping~$z_0$ to~$z_n$, namely~$\tphi_n$, this implies that~$B(z_0, r(\underline{z}))$ does not contain any exceptional point.
In particular, $\tphi_n(B(z_0, r(\underline{z})))$ does not contain~$y_0$.

By the topological exactness of~$f$ on~$\julia$ there is an integer~$M \ge 1$ such that there exists a point~$w_0$ of~$B(z_0, r(\underline{z})/8)$ in~$f^{-M}(y_0)$.
Since~$y_0$ is by hypothesis an exceptional point, $z = w_0$ is a critical point of~$f^M$.
Let~$\hrho > 0$ be sufficiently small so that the connected component~$W$ of~$f^{-M}(B(y_0, \hrho))$ containing~$w_0$ is contained in~$B(z_0, r(\underline{z})/8)$ and such that the only critical point of~$f^M$ in~$W$ is $z = w_0$.
Replacing~$(n_\ell)_{\ell = 1}^{\infty}$ by a subsequence if necessary we assume that for every integer~$\ell \ge 1$ we have~$n_{\ell + 1} \ge n_{\ell} + M$ and
$$ \tphi_{n_\ell}(B(z_0, r(\underline{z})/2))
\subset
B(y_0, \hrho). $$
Since by the above~$z_{n_\ell} \neq y_0$ and since~$f^M : W \to B(y_0, \hrho)$ is of degree at least~$2$, it follows that there are at least two points of~$W$ in~$f^{-M}(z_{n_{\ell}}) = f^{-M}(\tphi_{n_{\ell}}(z_0))$; let~$x_{\ell}$ be one of these points that is different from~$z_{n_\ell + M}$.
On the other hand, the fact that~$\tphi_{n_\ell}(B(z_0, r(\underline{z})/2))$ is contained in~$B(y_0, \hrho)$ and does not contain~$y_0$, implies that there is an inverse branch of~$f^{n_{\ell} + M}$ defined on~$B(z_0, r(\underline{z})/2)$, taking images in~$B(z_0, r(\underline{z})/8)$ and that maps~$z_0$ to~$x_{\ell}$; we denote by~$\phi_{\ell}$ the restriction of this inverse branch to~$B(z_0, r(\underline{z})/4)$, so by construction~$\phi_{\ell}$ has a univalent extension to~$B(z_0, r(\underline{z})/2)$.
Note that~$(\phi_{\ell})_{\ell = 1}^{+ \infty}$ is an IFS defined on~$B(z_0, r(\underline{z})/4)$, that is generated by~$f$ and has~$(m_{\ell})_{\ell = 1}^{+ \infty} \= (n_{\ell} + M)_{\ell = 1}^{+ \infty}$ as time sequence.

Lemma~\ref{l:freeness} with~$\rho = r(\underline{z})/4$ implies that the IFS $(\phi_\ell)_{\ell = 1}^{+ \infty}$ is free.
The proof of~\eqref{e:key estimate} is similar as in Case~$1$.
It remains to prove that, after replacing~$(n_{\ell})_{\ell \ge 1}^{+ \infty}$ by a subsequence if necessary, the IFS~$(\phi_{\ell})_{\ell = 1}^{+ \infty}$ is hyperbolic with respect to~$f$.
To do this, let~$D \ge 2$ be the local degree of~$f^M$ at~$w_0$, let~$p \ge 1$ be the minimal period of the exceptional point~$y_0$ and put~$L \= \sup_{\CC} |(f^p)'|$.
Note that we can reduce~$\hrho$ by replacing~$(n_{\ell})_{\ell = 1}^{+ \infty}$ by a subsequence; by doing this if necessary, we assume that for each~$j \in \{ 1, \ldots, p - 1 \}$ the set~$f^j(B(y_0, \hrho))$ is disjoint from~$B(y_0, \hrho)$, that for each~$n \in \{0, \ldots, p - 1 \}$ the point~$z_n$ is not contained in~$B(y_0, \hrho)$, that
\begin{equation}
\label{e:mass condition}
L^{2\mu(B(y_0, \hrho)) (D - 1)/D} < \lambda_0^{1/3}
\end{equation}
and that there is a constant~$C_2 > 0$ such that for each point~$z$ in~$B(y_0, \hrho)$ and each point~$x$ of~$W$ in~$f^{-M}(z)$ we have
$$ |(f^M)'(x)|
\ge
C_2 \dist(z, y_0)^{(D - 1)/D}. $$ 
Given an integer~$\ell \ge 1$, let~$T_{\ell}$ be the largest integer~$T \ge 1$ such that for all~$j \in \{0, \ldots, T - 1\}$ we have~$f^{jp}(z_{n_{\ell}}) \in B(y_0, \hrho)$; by our choice of~$\hrho$ we have~$p T_{\ell} \le n_{\ell}$.
Since~$\lim_{n \to + \infty} \frac{1}{n} \sum_{n = 1}^{+ \infty} \delta_{z_n} = \mu$, for every sufficiently large integer~$\ell \ge 1$ we have, 
\begin{equation}
\label{e:frequency condition}
T_{\ell}
\le
2 \mu(B(z_0, \rho)) n_{\ell} .
\end{equation}
Replacing~$(n_{\ell})_{\ell = 1}^{+ \infty}$ by a subsequence if necessary, we assume this holds for every integer~$\ell \ge 1$.
So, by the definition of~$L$, for every integer~$\ell \ge 1$ we have
$$ \dist(z_{n_\ell}, y_0)
\ge
L^{- T_{\ell}} \hrho. $$
Letting~$C_3 \= C_2 \hrho^{(D - 1)/D}$ we have by~\eqref{e:frequency condition} and then by~\eqref{e:mass condition},
\begin{multline*}
|(f^M)'(x_{\ell})|
\ge
C_2 \dist(z_{n_\ell}, y_0)^{(D - 1)/D}
\\ \ge
C_3 L^{- T_{\ell}(D - 1)/D}
\ge
C_3 \left( L^{2\mu(B(y_0, \hrho))(D - 1)/D} \right)^{- n_{\ell}}
\ge
C_3 \lambda_0^{- n_{\ell}/3}.
\end{multline*}
We thus have by the definition of~$C_0$ and~$\lambda_0$,
$$ |(f^{m_{\ell}})'(\phi_{\ell}(z_0))|
=
|(f^{n_{\ell}})'(z_{n_{\ell}})| \cdot |(f^M)'(x_{\ell})|
\ge
C_0 C_3 \lambda_0^{2 n_{\ell}/3}. $$
Since~$\phi_\ell$ has a univalent extension to~$B(z_0, r(\underline{z})/2)$, by Koebe Distortion Theorem there is a constant~$C_4 > 0$ independent of~$\ell$ so that for each~$\zeta \in B(z_0, r(\underline{z})/4)$ we have
$$ |(f^{m_{\ell}})'(\phi_{\ell}(\zeta))|
\ge
C_4 \lambda_0^{2 n_{\ell}/3}. $$
So, replacing~$(n_{\ell})_{\ell = 1}^{+ \infty}$ by a subsequence if necessary, we have for every integer~$\ell \ge 1$ and every~$\zeta \in B(z_0, r(\underline{z})/4)$,
$$ |(f^{m_{\ell}})'(\phi_{\ell}(\zeta))|
\ge
\lambda_0^{n_{\ell}/3}. $$
Together with~\eqref{e:hyperbolic time} this implies that the IFS $(\phi_{\ell})_{\ell = 1}^{+ \infty}$ is hyperbolic with respect to~$f$ and finishes the proof of the proposition.
\end{proof}

% \bibliographystyle{alpha}
% \bibliography{$HOME/papers/0BIB/papers}

\end{document}